\newcommand{\lip}{\text{Lip}}
\renewcommand{\leq}{\leqslant}
\renewcommand{\geq}{\geqslant}
\providecommand{\R}{\mathbb{R}}
\providecommand{\C}{\mathbb{C}}
\providecommand{\T}{\mathbb{T}}
\providecommand{\N}{\mathbb{N}}
\providecommand{\Z}{\mathbb{Z}}
\providecommand{\V}{\mathcal{V}}
\newtheorem{Theorem}{Theorem}
\newtheorem{Definition}{Definition}
\newtheorem{Corollary}{Corollary}
\newtheorem{Proposition}{Proposition}
\newtheorem{Lemma}{Lemma}
\newtheorem{Remark}{Remark}
\date{\today}
\title{Smoothness of the flow map for low-regularity solutions of the Camassa-Holm equations}
\author{Olivier Glass\footnote{CEREMADE, UMR CNRS 7534,
Universit\'e Paris-Dauphine, 
Place du Mar\'echal de Lattre de Tassigny,
75775 Paris Cedex 16,
FRANCE
},
Franck Sueur\footnote{CNRS, UMR 7598, Laboratoire Jacques-Louis Lions, F-75005, Paris, France}
\footnote{UPMC Univ Paris 06, UMR 7598, Laboratoire Jacques-Louis Lions, F-75005, Paris, France}}
\begin{document}
\maketitle
\begin{abstract}
It was recently proven by De Lellis, Kappeler, and Topalov in \cite{suisses} that  the  periodic  Cauchy problem for the Camassa-Holm  equations is locally well-posed in the space $ \lip (\T)$  endowed with the topology of  $H^1 (\T)$. We prove here that the Lagrangian  flow of these solutions are   analytic with respect to time and smooth with respect to the initial data. \par
These results can be adapted to  the  higher-order Camassa-Holm equations describing the exponential curves of the manifold of orientation preserving diffeomorphisms of $\T$ using the Riemannian structure induced by the Sobolev inner product $H^l (\T)$, for $l \in \N$, $l\geq 2$ (the classical  Camassa-Holm equation  corresponds to the case $l=1$):  the  periodic  Cauchy problem   is locally well-posed in the space
$ W^{2l-1,\infty} (\T)$ endowed with the topology of  $H^{2l-1} (\T)$ and the Lagrangian  flows of these solutions are   analytic with respect to time with values in $ W^{2l-1,\infty} (\T)$  and smooth with respect to the initial data. \par
These results extend some earlier results which dealt with more regular solutions. In particular our results cover the case of peakons, up to the first collision.
\end{abstract}
\section{Introduction}
We consider the Cauchy problem for the Camassa-Holm  equations:
\begin{equation} \label{CH0}
\left\{ \begin{array}{ll}
 \partial_t u - \partial_{txx} u+ 3 u \partial_x u -2  \partial_x u   \partial_{xx} u - u \partial_{xxx} u = 0 , \\
 u(0,x)=u_0(x), 
\end{array} \right.
\end{equation}
where $x$ runs over the line $\R$ or the one-dimensional torus $\T$. \par
This equation has been derived independently by:
\begin{itemize}
\item  Fokas and Fuchssteiner \cite{FF} where it appears as a member of a whole family of bi-hamiltonian equations generated by the method of recursion operator,
\item Camassa and Holm in \cite{CH} as a model for the propagation of water waves in the shallow water regime, when the wavelength is considerably larger than the average water depth (then $u$ represents the velocity, and also, roughly, the
height of the water's free surface above a flat bottom see \cite{cl}), 
\item Dai \cite{dai} as  a model for the propagation of nonlinear waves in cylindrical hyperelastic rods (in this case the function $u$ represents the radial stretch).
\end{itemize}
This equation attracted a lot of attention for its rich structure: 
\begin{itemize}
\item it is formally integrable, in the sense that there is an associated Lax pair, 
\item its solitary waves are solitons, i.e. they retain their shape and speed after the interaction with waves of the same type,
\item the Camassa-Holm equation possesses not only solutions that are global in time but models also wave breaking  in finite time. Wave breaking is an important physical phenomenon which is not captured by the other standard shallow water equations, as for example the KdV equation, and therefore makes the Camassa-Holm equation particularly interesting in that context.
\end{itemize}
Applying the operator $(\partial_x^{2} - 1 )^{-1}$ to the equation  \eqref{CH0} we get the following non local form of the Camassa-Holm  equations:
\begin{equation} \label{CH}
\left\{ \begin{array}{ll}
 \partial_t u +  \partial_x (\frac{1}{2}u^{2} + P)  =  0 , \\
 - (\partial_x^{2} - 1 ) P = u^2 + \frac{1}{2} (\partial_x u)^2  ,
\end{array} \right.
\end{equation}
which have a sense in $\mathcal{S}'$ for all $u$ in  $L^{1}_{\text{loc}} (\R ; H^1)$, where $H^1$ denotes the usual Sobolev space of order $1$. This space $H^{1}$ may appear as a very natural space to study this equation since, formally, the solutions of  \eqref{CH} preserves the $H^{1}$ norm.
Actually one gets a better insight into this time-invariance of the $H^{1}$ norm by introducing the quantity
\begin{equation}  \label{m}
m := u -  \partial_{x}^2 u .
\end{equation}
This quantity $m$ plays a special role in the theory. It can be compared to the vorticity for the incompressible Euler equation.
Moreover, if the equation is set on $\R$, integrating by parts and using that the relation \eqref{m} can be inverted into the formula 
\begin{equation}  \label{mi}
u(x) =  \frac{1}{2} \int_{ \R} m(y) e^{-|x-y|} \, dy,
\end{equation}
we obtain that the square of the $H^1$ norm 
\begin{equation}  \label{pe}
\| u \|^{2}_{H^1 (\R) } = \frac{1}{2} \int_{ \R} \int_{ \R} m(y) m(x) e^{-|x-y|} \, dx \, dy 
\end{equation}
can be reinterpreted as an interaction energy. \par
\ \par
Yet the well-posedness of the equation  \eqref{CH} for initial data $u_{0}$ in $H^{1}$ is a difficult issue. 
The first results of well-posedness  were obtained for smoother data. 
For instance  Constantin and Escher \cite{ce} first established the well-posedness of  the equation  \eqref{CH0} for $u_{0}$ in the Sobolev space $H^{3} (\R)$, by  applying Kato's theory for a hyperbolic quasi-linear PDE satisfied by $m$. \par
It has been subsequently proved by various methods  that the problem  \eqref{CH} is locally well-posed in the Sobolev space $H^s (\R)$ with $s > 3/2$ in 
\cite{RB,LO,M,Dfirst,D}.
These results hold only for a finite time interval, as it may appear a point where the profile of $u$ steepens gradually and ultimately the slope becomes vertical. In the context of water waves, this corresponds to the breaking of a wave, see the paper of Constantin and Escher \cite{acta} (see also the recent paper \cite{Brandolese}). In \cite{D}, Danchin have extended well-posedness to the Besov space $B^\frac{3}{2}_{2,1} (\R) $. \par
In \cite{M} G.  Misiolek proved that \eqref{CH} is locally well-posed in the space $  C^1 (\T)$ of continuously differentiable functions of the one-dimensional torus $\T$. 
Finally  De Lellis, Kappeler, and Topalov \cite{suisses} recently proved that the equation \eqref{CH} set on the torus $\T$ is well-posed in $W^{s,p} (\T) \cap \lip  (\T) $, endowed with the topology of  $W^{s,p} (\T)$, provided that $1 \leq s < 2$ and $1  \leq p < + \infty$. 
We rephrase their result here, only in the case $p=2$,  for sake of simplicity.
\begin{Theorem}[\cite{suisses}] \label{start1}
Let  $u_0$ be in $ \lip (\T)$. Then there exists $T > 0$ and a unique solution 
\begin{equation*}
u \in C (  ( -T,T),  (\lip (\T) , \sigma_{H^1 (\T)} ))  \cap C^1 (  ( -T,T);  (L^{\infty} (\T) , \sigma_{L^{2} (\T) } )) 
\end{equation*}
of \eqref{CH}. 
Moreover for any $u_0$ in $ \lip (\T) $ there exists $T > 0$ and  a neighborhood $ \mathcal V$ of $u_0$ in  $(\lip (\T) , \sigma_{H^1 (\T)} )$ such that the map 
\begin{equation}  \label{DC}
\left\{ \begin{array}{ll}
 \mathcal V \rightarrow   C (  ( -T,T);  (\lip (\T) , \sigma_{H^1 (\T)} ))  \cap C^1 (  ( -T,T);  (L^{\infty} (\T) , \sigma_{L^{2} (\T) } )) ,\\
 u_0 \mapsto \text{ the corresponding solution } u \text{ of } \eqref{CH} 
\end{array} \right.
\end{equation}
is well-defined and continuous.
\end{Theorem}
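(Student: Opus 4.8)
The plan is to recast \eqref{CH} as an autonomous ODE in Lagrangian coordinates and to solve it by the Cauchy--Lipschitz theorem in a Banach space adapted to $\lip(\T)$, the gap between this space and the coarser topology of the statement being absorbed by a priori bounds. Denote by $G$ the Green kernel of $1-\partial_x^2$ on $\T$, so that the pressure in \eqref{CH} reads $P=G*(u^2+\tfrac12(\partial_x u)^2)$, and let $\gamma(t,\cdot)$ be the flow of $u$, i.e. $\partial_t\gamma=u(t,\gamma)$, $\gamma(0,\cdot)=\Id$. Writing $v:=u\circ\gamma$ and using \eqref{CH} together with the elliptic identity $\partial_x^2 P=P-u^2-\tfrac12(\partial_x u)^2$, one finds that the pair $(f,v):=(\gamma-\Id,v)$ obeys the closed system
\[
\partial_t f=v,\qquad \partial_t v=-\mathcal{R}[f,v],
\]
where, with $\gamma=\Id+f$ and $\gamma_x=1+\partial_x f$,
\[
\mathcal{R}[f,v](x)=\int_{\T}\partial_x G\big(\gamma(x)-\gamma(\xi)\big)\Big(v(\xi)^2\gamma_x(\xi)+\tfrac12\frac{(\partial_x v)(\xi)^2}{\gamma_x(\xi)}\Big)\,d\xi.
\]
The change of variables $y=\gamma(\xi)$ expresses $(\partial_x P)\circ\gamma$ through the Lagrangian unknowns, the factor $(\partial_x v)^2/\gamma_x$ encoding $(\partial_x u)^2\circ\gamma$ via $\partial_x v=(\partial_x u\circ\gamma)\,\gamma_x$.

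First I would fix $E:=W^{1,\infty}_{\mathrm{per}}(\T)\times W^{1,\infty}_{\mathrm{per}}(\T)$ and the open set $\mathcal{O}:=\{(f,v)\in E:\operatorname{ess\,inf}_{\T}(1+\partial_x f)>0\}$, on which $\gamma$ is a bi-Lipschitz orientation-preserving homeomorphism of $\T$; the datum $u_0\in\lip(\T)$ corresponds to $(f,v)|_{t=0}=(0,u_0)\in\mathcal{O}$. The key analytic step is to show that $\mathcal{R}\colon\mathcal{O}\to W^{1,\infty}_{\mathrm{per}}(\T)$ is well-defined and locally Lipschitz. Membership in $W^{1,\infty}$ is not obvious, since $\partial_x^2 G$ carries a Dirac mass; however this singular contribution cancels exactly against the local quadratic terms, giving the differentiated identity $\partial_x\mathcal{R}[f,v]=\gamma_x\,\mathcal{S}[f,v]-v^2\gamma_x-\tfrac12(\partial_x v)^2/\gamma_x$, where $\mathcal{S}$ is the analogue of $\mathcal{R}$ with $\partial_x G$ replaced by $G$. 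As $G$ is Lipschitz and $1+\partial_x f$ is bounded below on $\mathcal{O}$, both $\mathcal{R}$ and $\mathcal{S}$ are locally Lipschitz from $\mathcal{O}$ into $L^\infty$, whence $\mathcal{R}$ maps $\mathcal{O}$ locally Lipschitz into $W^{1,\infty}$. Cauchy--Lipschitz in $E$ then yields, for each $u_0$, a unique maximal solution $(f,v)\in C^1((-T,T);E)$ with $\gamma_x$ bounded below on each $[-T',T']\Subset(-T,T)$, depending Lipschitz-continuously on the datum in the $E$-norm. Reconstructing $u(t,\cdot):=v(t,\cdot)\circ\gamma(t,\cdot)^{-1}\in\lip(\T)$ and verifying the chain-rule identities in the a.e. sense recovers a solution of \eqref{CH}; conversely any solution in the stated class generates, through its (Cauchy--Lipschitz) flow, a solution of the Lagrangian system, so uniqueness transfers back.

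It remains to upgrade these conclusions to the topologies of the statement, and here lies the main obstacle: the Lagrangian problem is naturally well-posed in the strong $W^{1,\infty}$ topology, whereas continuity is asserted in the strictly weaker $H^1$ (resp.\ $L^2$) topology, in which the flow map is only continuous and not Lipschitz. The bridge is the uniform a priori bound $\sup_{|t|\leq T'}\|u(t)\|_{\lip}\leq C$, which also keeps $\gamma_x$ away from $0$ and thereby confines the analysis to times before the first collision (no wave-breaking). Combined with strong $L^2$ convergence of the derivatives, this bound controls the quadratic nonlinearity, since a family bounded in $L^\infty$ and convergent in $L^2$ converges in $L^1$, so that $u\mapsto u^2+\tfrac12(\partial_x u)^2$, and hence $P$, pass to the limit. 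Concretely, for time-continuity one writes $\partial_x u(t)=w(t)\circ\gamma(t)^{-1}$ with $w:=\partial_x v/\gamma_x$, uses the $L^\infty$-continuity of $t\mapsto w(t)$ and the $C^0$-continuity of $t\mapsto\gamma(t)^{\pm1}$, and invokes continuity of composition $(g,\phi)\mapsto g\circ\phi$ into $L^2$ to obtain $u\in C((-T,T);(\lip(\T),\sigma_{H^1(\T)}))$; the identity $\partial_t u=-u\,\partial_x u-\partial_x P\in L^\infty$ together with the same composition estimates gives $u\in C^1((-T,T);(L^\infty(\T),\sigma_{L^2(\T)}))$. For continuous dependence on the datum I would run a stability estimate in the $H^1$-metric of the reconstructed profiles, the quadratic and nonlocal terms being tamed by the uniform $\lip$ bound exactly as above, and close the argument by a compactness-and-uniqueness scheme identifying the limit with the solution of the Lagrangian system issued from the limiting datum. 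The delicate point throughout is precisely this transfer across the regularity gap: it is what forces the use of the $H^1$ topology rather than the $\lip$ topology, in which the map fails to be continuous.
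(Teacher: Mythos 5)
Your overall strategy coincides with the one used for this result in the paper (the statement is cited from \cite{suisses}, and the paper's appendix proves the higher-order analogue, Theorem \ref{start1k}, by the same method): pass to Lagrangian variables, view \eqref{CH} as an autonomous ODE for $(\gamma-\Id,\,u\circ\gamma)$ in a Banach space of Lipschitz functions, solve it by a fixed-point argument, and transport existence and uniqueness back to Eulerian variables. Your kernel formulas, including the cancellation $\partial_x\mathcal{R}[f,v]=\gamma_x\,\mathcal{S}[f,v]-v^2\gamma_x-\tfrac12(\partial_x v)^2/\gamma_x$, are correct, and the existence--uniqueness part of your argument is essentially sound.

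The genuine gap is in the continuous-dependence statement, which is the actual content of the theorem. Cauchy--Lipschitz in $E=W^{1,\infty}\times W^{1,\infty}$ yields an existence time and a modulus of continuity that are uniform only on \emph{strong} $W^{1,\infty}$-neighbourhoods of the datum, whereas \eqref{DC} must be well defined (with one single $T$) and continuous on a set $\mathcal V$ that is open for the $H^1$ topology, hence contains perturbations of $u_0$ of arbitrarily large Lipschitz norm. You bridge this with ``the uniform a priori bound $\sup_{|t|\leq T'}\|u(t)\|_{\lip}\leq C$'' for data in $\mathcal V$, but nothing in your construction delivers such a bound, and it cannot be derived a posteriori: along characteristics one has $Du_x=-\tfrac12u_x^2+u^2-P\leq-\tfrac12u_x^2+C\|u_0\|_{H^1}^2$, and any $H^1$-ball in $\lip(\T)$ contains smooth data with $\inf u_0'$ arbitrarily negative (a smoothed triangular bump of height $\varepsilon^2/M$ and width $\varepsilon^2/M^2$ has $H^1$-norm $O(\varepsilon)$ and slope $-M$), whose solutions undergo wave breaking in time $O(1/M)$. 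So a uniform Lipschitz bound and a uniform $T$ can only hold on sets carrying a built-in Lipschitz bound; they are not consequences of your scheme. The ingredient you are missing --- and it is precisely the technical core of \cite{suisses}, reflected in the paper's appendix propositions on the $C^1$-dependence of the conjugated operators $\xi\mapsto(-i\partial_x-\xi_0)_\xi^{-1}$ --- is that the Lagrangian vector field, restricted to a set where $\|f\|_{W^{1,\infty}}+\|v\|_{W^{1,\infty}}\leq R$ and $1+\partial_xf\geq c>0$, is Lipschitz (indeed $C^1$) with respect to the \emph{weaker} $H^1\times H^1$ metric. Such a Lipschitz ball is complete for that metric, so the Picard iteration can be run in it, producing an existence time uniform on the ball together with Lipschitz dependence of $(\gamma,v)$ on the datum in the $H^1$ metric; only after this does your composition argument (continuity of $(g,\phi)\mapsto g\circ\phi$ into $H^1$, resp.\ $L^2$, which is the paper's final lemma) yield the continuity of \eqref{DC}. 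Your strong-norm local Lipschitz property of $\mathcal R$ cannot substitute for this --- it says nothing about perturbations small in $H^1$ but large in $\lip$ --- and the concluding ``stability estimate plus compactness-and-uniqueness scheme'' presupposes exactly the bounds it is supposed to produce.

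A secondary, related point: in the converse direction (every solution of \eqref{CH} in the stated class generates a solution of the Lagrangian ODE, whence uniqueness transfers) you need $\|u(t)\|_{\lip}$ to be locally bounded in time in order to construct the flow, and this does not follow from continuity of $t\mapsto u(t)$ into $(\lip(\T),\sigma_{H^1(\T)})$; the paper delegates precisely this step to \cite[Proposition 5]{suisses}.
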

In the statement above the notation $(\lip (\T) , \sigma_{H^1 (\T)} )$ and $(L^{\infty} (\T) , \sigma_{L^{2} (\T) } )$ means that the 
space $\lip (\T)$ and $ L^{\infty} (\T)$ are respectively endowed with the topology of  $H^1 (\T)$ and $L^{2} (\T) $. 
It is shown in \cite[Section 1.2]{suisses} that the solution $u$ given by Theorem \ref{start1} may not be continuous with respect to time in the space $\lip (\T)$, endowed with its usual strong topology. \par
The proof of Theorem \ref{start1} relies on the Lagrangian description of the Camassa-Holm equation and on the observation that the flow map $\xi$ defined from  $ ( -T,T) \times \T$ to $\T$  by
\begin{equation} \label{flow}
\xi(t,x)  =  x + \int^{t}_0 u (s,\xi(s,x)) \, ds 
\end{equation}
and the Lagrangian velocity $u(t,\xi(t,x))$ are more regular than the Eulerian velocity $u(t,x)$.
In particular $\xi$ is $C^{1}$ over $(-T,T) $ with values in $\lip(\T)$, endowed with its strong topology. \par
The aim of this paper is to prove that the flow map of these solutions actually benefits from extra smoothness properties. 
In particular we prove the following result concerning the smoothness in time of the flow map.
\begin{Theorem} \label{start2}
Assume that $u_0$ is in $\lip (\T)$.
Then, with the notations above, $ \xi$ belongs to the space $C^{\omega}((-T,T); \lip(\T))$ of the analytic functions from  $(-T,T)$ to $\lip(\T)$.
\end{Theorem}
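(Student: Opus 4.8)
\emph{Proof proposal.} The plan is to show that the natural Lagrangian quantities attached to the flow obey a \emph{closed} system of ordinary differential equations whose vector field, although built out of the nonlocal operator $(\partial_x^2-1)^{-1}$, is real-analytic on a suitable Banach space, and then to invoke the Banach-space version of the Cauchy--Kovalevskaya theorem. Alongside $\xi$ I would introduce the Lagrangian velocity $v(t,x):=\partial_t\xi(t,x)=u(t,\xi(t,x))$, the Lagrangian slope $w(t,x):=(\partial_x u)(t,\xi(t,x))$, the Jacobian $J(t,x):=\partial_x\xi(t,x)$, and the displacement $\eta:=\xi-\Id$. Differentiating \eqref{flow} in time and using \eqref{CH} together with the identity $\partial_x^2 P=P-u^2-\tfrac12(\partial_x u)^2$ (which is just the second line of \eqref{CH}), a direct computation in which the transport terms $v\,u_x(\xi)$ and $v\,u_{xx}(\xi)$ cancel yields the autonomous system
\begin{align*}
\partial_t\eta=v,\qquad \partial_t v=-Q,\qquad \partial_t J=wJ,\qquad \partial_t w=v^2-\tfrac12 w^2-\Pi,
\end{align*}
where, writing $G$ for the Green's function of $1-\partial_x^2$ on $\T$ and $f:=v^2+\tfrac12 w^2$, the two nonlocal terms are
\begin{align*}
\Pi(t,x)=\int_\T G(\xi(t,x)-\xi(t,z))\,f(t,z)\,J(t,z)\,dz,\qquad Q(t,x)=\int_\T (\partial_x G)(\xi(t,x)-\xi(t,z))\,f(t,z)\,J(t,z)\,dz.
\end{align*}
The decisive structural point is that no spatial derivative of an unknown appears on the right-hand side: $J$ and $w$ are carried as \emph{independent} variables, the constraints $J=\partial_x\xi$ and $wJ=\partial_x v$ being automatically propagated from their values at $t=0$, so the system genuinely closes.

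I would then read this as an abstract ODE $\dot Y=F(Y)$ for $Y=(\eta,v,J,w)$ in the Banach algebra $X:=(L^\infty(\T))^4$, restricted to the open set $\{J>0\}$. The algebraic nonlinearities $v$, $wJ$ and $v^2-\tfrac12 w^2$ are polynomial, hence analytic, since $L^\infty(\T)$ is a Banach algebra. The analyticity of the nonlocal terms $\Pi$ and $Q$ is the heart of the matter and, I expect, the main obstacle: the periodic kernel $G$ is smooth except for a corner on the diagonal $x=z$ (it is not analytic in its argument at $0$, because of an absolute value), so the map $\eta\mapsto G(\xi(\cdot)-\xi(\cdot))$ is \emph{not} directly analytic. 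The remedy is to exploit that $\xi$ is orientation preserving ($J>0$): on one period $G(r)$ is a fixed linear combination $c_1 e^{r}+c_2 e^{-r}$, and the monotonicity of the lift of $\xi$ lets one split the $z$-integral across the diagonal, remove the absolute value on each piece, and factor $e^{\pm(\xi(x)-\xi(z))}=e^{\pm\xi(x)}\,e^{\mp\xi(z)}$ with $e^{\pm\xi}=e^{\pm\,\Id}\,e^{\pm\eta}$. Since $\eta\mapsto e^{\pm\eta}$ is entire from $L^\infty(\T)$ into itself, $\Pi$ and $Q$ become finite sums of products of such analytic factors with bounded linear (Volterra-type) integral operators applied to the polynomial quantity $fJ$; consequently $F\colon\{J>0\}\to X$ is real-analytic. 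The bookkeeping required to carry out this diagonal splitting on the torus, with the hyperbolic-cosine periodic Green's function, is the one genuinely technical step.

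With $F$ real-analytic, and in particular locally Lipschitz, the holomorphic form of the Cauchy--Lipschitz theorem (holomorphic vector fields generate holomorphic flows) provides for each datum a unique maximal solution that extends holomorphically to a complex time-disc; equivalently $Y(\cdot)$ is analytic from an interval into $X$. On the other hand, the Lagrangian flow furnished by Theorem \ref{start1} gives a continuous $X$-valued function solving this very system with $Y(0)=(0,u_0,1,\partial_x u_0)$, so by uniqueness it coincides with the analytic solution; in particular $t\mapsto(\eta(t),J(t))$ is analytic with values in $L^\infty(\T)\times L^\infty(\T)$. Finally, since the constraint $\partial_x\eta=J-1$ holds for every $t$, differentiating in $x$ the convergent $L^\infty$-valued power series of $\eta$ term by term and identifying the result with the (also convergent) series of $J-1$ shows that the series of $\eta$ converges in the norm of $\lip(\T)$. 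Hence $\xi=\Id+\eta$ is analytic from $(-T,T)$ into $\lip(\T)$, which is exactly the assertion of Theorem \ref{start2}.
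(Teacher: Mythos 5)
Your proposal is correct in substance, but it follows a genuinely different route from the paper. The paper never recasts the dynamics as an autonomous ODE with an analytic vector field; it works instead with iterated material derivatives: a combinatorial identity (Lemma \ref{P1k}) controls the commutators of $\partial_x^m$ and $D^k$ with explicit bounds on the coefficients, an induction on $k$ (Proposition \ref{recuk}) then produces quantitative bounds $\|\partial_x^m D^k u\|_{L^\infty}\leq 4\V_{m,k}$ of analytic type, and analyticity of $\xi$ follows from $\partial_t^{k+1}\xi=(D^ku)(t,\xi)$; low-regularity data are handled by mollifying $u_0$, applying these a priori estimates to smooth solutions, and passing to the limit via the continuity \eqref{DCk} of the solution map. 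Your route --- enlarging the Lagrangian phase space to $(\eta,v,J,w)\in(L^\infty(\T))^4$ so that no spatial derivative of an unknown appears on the right-hand side, proving analyticity of the resulting vector field, and identifying the flow of Theorem \ref{start1} with the solution of this analytic ODE by uniqueness --- is softer: it avoids all the combinatorics and the smoothing/limit step, and it yields as a by-product \emph{analytic} dependence of $\xi$ on $u_0$ in the $\lip$-norm topology (though this does not directly recover Corollary \ref{start3}, which concerns the weaker $H^1$ topology). Conversely, the paper's method gives explicit factorial estimates and extends verbatim to the higher-order equations (Theorem \ref{start2k}), whereas your argument leans on the explicit exponential form of the Green's function of $1-\partial_x^2$; for $l\geq 2$ one would need the factorization \eqref{deflambda}--\eqref{secon} to produce the analogous exponential kernels.

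Two points in your write-up must be stated more carefully, although both are repaired by your own construction. First, on the enlarged phase space the condition $\operatorname{essinf}J>0$ does \emph{not} make $\xi=\Id+\eta$ monotone: there $J$ is an independent coordinate and $\eta$ is merely $L^\infty$, so the map defined by the Green's-function integrals is \emph{not} analytic on $\{J>0\}$, and the sentence ``consequently $F\colon\{J>0\}\to X$ is real-analytic'' is wrong as literally stated. What is true is that the \emph{split} formula --- exponential factors composed with fixed bounded linear window-integral operators, e.g.\ $\Pi=c_1e^{\eta}\,T[e^{-\eta}fJ]+c_2e^{-\eta}\,\widetilde{T}[e^{\eta}fJ]$ with $(Th)(x):=e^{x}\int_{x-1}^{x}e^{-z}h(z)\,dz$ --- defines an analytic map on all of $(L^\infty(\T))^4$; you must take this split formula as the \emph{definition} of the vector field, and record that it coincides with the Camassa--Holm nonlocal terms exactly along curves on which $\xi$ is an increasing Lipschitz homeomorphism, which is what Theorem \ref{start1} guarantees for the Lagrangian flow, so the uniqueness/identification step still applies. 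Second, identification at $t=0$ only yields analyticity on the (possibly smaller) existence interval of the abstract ODE; to conclude on all of $(-T,T)$ you should run the same local existence-plus-uniqueness argument at every $t_0\in(-T,T)$, using that the De Lellis--Kappeler--Topalov curve is a global $C^1$ solution of the enlarged system (the verification of the $w$-equation along it uses the a.e.\ chain rule for compositions with the bi-Lipschitz map $\xi(t)$, which is legitimate since $\xi(t)$ preserves null sets). With these two repairs your proof is complete.
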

As a corollary of Theorem \ref{start2} we infer that the flow is also smooth with respect to the initial data. 
\begin{Corollary} \label{start3}
With the previous notations, the map 
\begin{equation} \label{SC}
\left\{ \begin{array}{ll}
 \mathcal V \rightarrow   C^{\omega } (  ( -T,T);   \lip (\T))  ,\\
u_0 \mapsto \xi 
\end{array} \right.
\end{equation}
is of class $C^{\infty}$.
\end{Corollary}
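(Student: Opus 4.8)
The plan is to deduce the corollary from the analytic ordinary differential equation in a Banach space that already underlies Theorem \ref{start2}, now viewing the initial datum as a parameter rather than fixing it. The natural setup, which I would reuse, recasts the Lagrangian problem as an autonomous ODE
\begin{equation*}
\dot Y = \mathcal F(Y), \qquad Y(0) = Y_0(u_0),
\end{equation*}
for the state $Y = (\xi - \Id,\, v)$, where $v := u \circ \xi$ is the Lagrangian velocity, posed in a Banach space $X$ built on $\lip(\T)$ and restricted to the open set $U \subset X$ on which $\partial_x \xi$ stays bounded away from $0$. The crucial point is that on $U$ the vector field $\mathcal F$ is real-analytic: its only nonlocal ingredient is the convolution of $v^2 + \tfrac12 (\partial_x v / \partial_x \xi)^2$ against (the derivative of) the Green kernel of $1 - \partial_x^2$, evaluated at $\xi(x) - \xi(z)$, and this composes analytically with the diffeomorphism $\xi$ precisely because $\partial_x \xi > 0$ on $U$. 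In this language Theorem \ref{start2} is the statement that a solution of an analytic ODE in a Banach space is analytic in time, and Corollary \ref{start3} is its parametric counterpart: analyticity, and a fortiori smoothness, in the initial condition.

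The first and main analytic step is to invoke, or else establish, the abstract dependence result: for a real-analytic vector field $\mathcal F$ on an open subset $U$ of a Banach space, the local flow $(t, Y_0) \mapsto Y(t; Y_0)$ is jointly real-analytic on its domain, so that $Y_0 \mapsto \left( t \mapsto Y(t; Y_0) \right)$ is real-analytic, hence $C^\infty$, from $U$ into $C^\omega((-T,T); X)$ on a possibly shorter interval. I would prove this along the usual lines: write the ODE in integral form, differentiate formally in $Y_0$ to obtain the linear variational equation $\dot Z = D\mathcal F(Y)\, Z$ together with its higher-order analogues, solve each by a contraction argument with estimates uniform on $(-T,T)$, and then confirm that these candidates are the genuine Fréchet derivatives via a Taylor-with-remainder bound; alternatively one packages the time variable into the space $C^\omega((-T,T);X)$ and applies the analytic implicit function theorem to the fixed-point equation defining the flow.

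It would then remain to assemble the corollary by composition. If the datum were measured in the strong topology of $\lip(\T)$, the map $u_0 \mapsto Y_0(u_0) = (0,\, u_0)$ would be affine, hence $C^\infty$, into $X$; composing it with the $C^\infty$ flow map $Y_0 \mapsto (t \mapsto Y(t))$ and with the continuous affine projection $Y = (\xi - \Id, v) \mapsto \xi$, which induces a continuous map $C^\omega((-T,T);X) \to C^\omega((-T,T);\lip(\T))$, would immediately yield that $u_0 \mapsto \xi$ is of class $C^\infty$.

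The step I expect to be the genuine obstacle is exactly the compatibility of the topologies in this last composition. The vector field $\mathcal F$ lives on $X$ equipped with the strong $\lip$-norm, whereas the datum $u_0$ is controlled only in the $H^1$-topology, in which $u_0 \mapsto u_0$ into the strong $\lip$-norm fails to be continuous. The resolution must exploit, as already in Theorems \ref{start1} and \ref{start2}, that the Lagrangian reformulation smooths the flow: the dependence of $Y$ on $u_0$ is propagated through $\mathcal F$ and its differentials, i.e. through the regular nonlocal term, rather than through the rough initial slice. Making this rigorous — checking that the variational solutions and their estimates close in a framework where the datum is measured in $\sigma_{H^1(\T)}$ while $\xi$ is measured in the strong topology of $\lip(\T)$ — is the technical heart of the argument, the remaining ingredients (analyticity of $\mathcal F$ and the contraction estimates) being inherited from the proof of Theorem \ref{start2}.
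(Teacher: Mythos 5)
The step you single out as the ``technical heart'' --- establishing smoothness with the datum measured in $\sigma_{H^1 (\T)}$ --- is not a gap to be filled: it is impossible, and fortunately it is also not what the Corollary asserts. Impossible: compose the flow map with the continuous linear map $C^{\omega } (  ( -T,T);   \lip (\T)) \to \lip(\T)$, $\eta \mapsto \partial_t \eta|_{t=0}$. Since $\partial_t \xi|_{t=0} = u_0$, any continuity (let alone Fr\'echet differentiability) of $u_0 \mapsto \xi$ from $(\mathcal V, \sigma_{H^1(\T)})$ into $C^{\omega } (  ( -T,T);   \lip (\T))$ would force the identity map from $(\lip(\T), \sigma_{H^1(\T)})$ to $\lip(\T)$ with its norm topology to be continuous on $\mathcal V$, which is false (take high-frequency, small-amplitude perturbations: $\|\cdot\|_{H^1}$ small, Lipschitz constant huge). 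Not what is asserted: since $\|\cdot\|_{H^1(\T)} \leq C \|\cdot\|_{W^{1,\infty}(\T)}$, the set $\mathcal V$, being a $\sigma_{H^1(\T)}$-neighborhood inside $\lip(\T)$, is automatically open for the norm of the Banach space $\lip(\T)$, and the $C^\infty$ statement is Fr\'echet smoothness with respect to this strong norm --- the same convention as in \cite[Corollary 1]{ogfstt}, to which the paper refers. So you should delete that last paragraph, observe that the domain is norm-open, and then your composition (affine injection $u_0 \mapsto (0,u_0)$, flow map, projection) makes sense as stated.

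Once this is corrected, your route is genuinely different from the paper's, and it remains conditional on a claim that the paper never establishes: the real-analyticity of the Lagrangian vector field $\mathcal F$ on a $\lip(\T)$-based Banach manifold. The Appendix (following \cite{suisses}) proves only that this field is $C^1$, and the proof of Theorem \ref{start2k} does not pass through analyticity of the vector field at all --- it runs direct combinatorial estimates on $D^k u$ (Lemma \ref{P1k}, Lemma \ref{Cocoa}, Proposition \ref{recuk}). Hence nothing you need is ``inherited from the proof of Theorem \ref{start2}'': both the analyticity of $\mathcal F$ (plausible, and provable by splitting the Green kernel at the diagonal and using that $\xi$ is order-preserving, in the spirit of \cite{KLT} in the smooth category) and the abstract joint-analyticity theorem for flows of analytic vector fields are additional work of real substance. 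By contrast, the argument the paper intends (it omits the proof, pointing to \cite[Corollary 1]{ogfstt}) needs nothing new: one uses $\partial_t^{k+1}\xi = D^k u(t, \xi(t,\cdot))$, notes that each Taylor coefficient $\frac{1}{(k+1)!}\,\partial_t^{k+1}\xi|_{t=0}$ is, via the inductive relations defining $D^k u$ from the equation, a continuous polynomial expression in $u_0$ on $\lip(\T)$, and then invokes the bounds \eqref{indun}, which are uniform on $\lip$-balls, to sum the Taylor series together with all of its derivatives with respect to $u_0$; this yields $C^\infty$ (in fact analytic) dependence with respect to the strong topology directly from estimates already proven for Theorem \ref{start2k}.
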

Actually our results cover a more general case that we are now going to describe.
Let us first recall that the Camassa-Holm equation describes the exponential curves of the manifold of orientation preserving diffeomorphisms of $\T$ using the Riemannian structure induced by the Sobolev inner product $H^1 (\T)$ (see for instance \cite{Lenells} and references therein). \par
This can be seen as a counterpart of the celebrated papers \cite{Arnold}  and \cite{EbinMarsden}  where classical solutions of the 
incompressible Euler equations are interpreted as  geodesics of a Riemannian manifold of infinite dimension. \par
In this spirit some  higher-order Camassa-Holm equations can be considered  using the Sobolev space $H^l (\T)$, for $l \in \N$, $l\geq 2$, instead of $H^1 (\T)$, see for instance \cite{helvet,JNMP}. 
These equations read:
\begin{eqnarray} \label{CHorderl}
\partial_t u  + u \partial_x u = A_l^{-1} C_l (u) 
\end{eqnarray}
where 
\begin{eqnarray} \label{cla}
A_l := \sum_{j=0}^l (-1)^j \partial_x^{2j} , \quad
C_l (u) := -u A_l \partial_x u + A_l (u \partial_x u ) - 2 ( \partial_x u) A_l u .
\end{eqnarray}
For these equations we have the following  well-posedness  result which is a counterpart of Theorem \ref{start1}.
\begin{Theorem} \label{start1k}
Let $l \in \N$, $l\geq 1$.
Let  $u_0$ be in $ W^{2l-1,\infty} (\T)$. Then there exists $T > 0$ and only one solution 
\begin{equation*}
u \in C (( -T,T),  (W^{2l-1,\infty}(\T), \sigma_{H^{2l-1} (\T)} )) \cap C^1 (( -T,T), ( W^{2l-2,\infty} (\T), \sigma_{H^{2l-2} (\T) }))
\end{equation*}
of \eqref{CH}.  Moreover for any $u_0$ in $W^{l,\infty} (\T)$ there exists $T > 0$ and a neighborhood $\mathcal V$ of $u_0$ in $(W^{2l-1,\infty}(\T), \sigma_{H^{2l-1}(\T)})$ such that the map 
\begin{equation} \label{DCk}
\left\{ \begin{array}{ll}
 \mathcal V \rightarrow C(( -T,T), (W^{2l-1,\infty}(\T), \sigma_{H^{2l-1} (\T)} ))  \cap C^1 (( -T,T), (W^{2l-2,\infty} (\T), \sigma_{H^{2l-2} (\T) } )), \\
 u_0 \mapsto \text{ the corresponding solution } u \text{ of } \eqref{CHorderl} 
\end{array} \right.
\end{equation}
is well-defined and continuous.
\end{Theorem}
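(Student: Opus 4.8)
The plan is to establish Theorem~\ref{start1k} by reducing the higher-order Camassa-Holm equation \eqref{CHorderl} to a fixed-point problem in Lagrangian coordinates, mirroring the strategy used in \cite{suisses} for the case $l=1$ but carried out at the regularity level $W^{2l-1,\infty}(\T)$. The first step is to rewrite the equation in the nonlocal form analogous to \eqref{CH}: since $A_l$ is an elliptic operator of order $2l$, its inverse $A_l^{-1}$ is a smoothing operator of order $2l$, and one checks that the right-hand side $A_l^{-1}C_l(u)$ gains regularity relative to $u$. The key structural observation is that $C_l(u)$, although it formally involves derivatives of $u$ up to order $2l+1$, can be rewritten after integration by parts as a sum of terms of the schematic form $\partial_x^j u \cdot \partial_x^k u$ with $j+k \leq 2l$, so that applying $A_l^{-1}$ yields a map that is locally Lipschitz (indeed polynomial) from $W^{2l-1,\infty}(\T)$ into itself. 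This is the analogue of the smoothing that takes the equation \eqref{CH0} into \eqref{CH}.

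Next I would pass to the Lagrangian formulation. Introducing the flow $\xi$ as in \eqref{flow}, the transported velocity $v(t,x) := u(t,\xi(t,x))$ satisfies an equation in which the only spatial derivatives of the solution appearing on the right-hand side are controlled, because the transport term $u\partial_x u$ is absorbed into the material derivative $\partial_t v$. Writing the remaining nonlocal term $A_l^{-1}C_l(u)$ in Lagrangian variables using the change-of-variables formula and the explicit Green's kernel of $A_l$ (a linear combination of exponentials $e^{-\lambda_k|x-y|}$ with the $\lambda_k$ the roots of the symbol of $A_l$), one obtains a closed system for the pair $(\xi, v)$ of the form $\partial_t \xi = v$, $\partial_t v = F(\xi, v)$, where $F$ is a nonlinear but \emph{non-derivative-losing} operator on an appropriate function space built on $W^{2l-1,\infty}(\T)$. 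The crucial point is that composition by the diffeomorphism $\xi$ does not cost derivatives once one works in the Lagrangian frame, whereas in the Eulerian frame the map $u \mapsto \partial_x u$ is unbounded on $W^{2l-1,\infty}$ in the weak topology.

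With the system in the form $\partial_t(\xi,v) = G(\xi,v)$ and $G$ locally Lipschitz on a Banach space modeled on $W^{2l-1,\infty}(\T)$, existence and uniqueness on a time interval $(-T,T)$ follow from the Picard-Lindel\"of theorem (Cauchy-Lipschitz in Banach spaces), and continuous dependence on the initial datum $u_0$ follows from the standard Lipschitz estimate for solutions of ODEs in Banach spaces with respect to the initial condition. One must verify that $T$ and the neighborhood $\mathcal V$ can be chosen uniformly, which is immediate from the local Lipschitz bound on $G$. The continuity of the solution map into the spaces endowed with the weak topologies $\sigma_{H^{2l-1}(\T)}$ and $\sigma_{H^{2l-2}(\T)}$ is then read off by returning to Eulerian variables: strong convergence in the Lagrangian frame together with the uniform $W^{2l-1,\infty}$ bound yields weak convergence after composition, by an Arzel\`a-Ascoli/weak-compactness argument exactly as in the case $l=1$.

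The main obstacle I anticipate is the verification that the right-hand side $G$ genuinely loses no derivatives, i.e.\ that $A_l^{-1}C_l(u)$, expressed in Lagrangian variables, maps $W^{2l-1,\infty}$ into itself with local Lipschitz dependence. The difficulty is that $C_l(u)$ a priori contains a term $u\,A_l\partial_x u$ carrying $2l+1$ spatial derivatives, which exceeds the regularity of the data; the resolution requires a careful algebraic cancellation, showing that the top-order contributions in $C_l(u)$ combine into a total derivative or a lower-order expression before $A_l^{-1}$ is applied. Establishing this cancellation for general $l$---rather than exhibiting it case by case---is the technical heart of the argument and the place where the specific structure \eqref{cla} of $C_l$ is essential. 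Once this commutator-type estimate is in hand, the remaining steps are the routine Banach-space ODE machinery described above.
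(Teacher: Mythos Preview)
Your overall strategy---pass to Lagrangian variables, write the system as an ODE $(\xi,v)'=(v,f(\xi,v))$ on a Banach manifold built on $W^{2l-1,\infty}(\T)$, invoke Cauchy--Lipschitz, then return to Eulerian coordinates---is exactly the route the paper takes, following \cite{suisses}. Two points of emphasis differ. First, the ``main obstacle'' you anticipate, namely the cancellation of the $(2l+1)$-order terms in $C_l(u)$, is not where the paper locates the technical work: citing \cite{chk}, the paper takes as given the form \eqref{CHk}--\eqref{CHkF}, where $\mathcal{F}[u]$ is already a differential polynomial of order $2l-1$, so the derivative-loss issue is dispatched at the outset rather than being the heart of the proof. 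Second, rather than using the Green's kernel of $A_l$ as you propose, the paper factors $A$ over $\C$ into first-order operators via \eqref{deflambda}--\eqref{secon} and proves that each conjugated first-order inverse $\xi \mapsto (-i\partial_x - \xi_0)_\xi^{-1}$ is $C^1$ as a map into $\mathcal{L}(W^{j-1,\infty}(\T),W^{j,\infty}(\T))$; composing these yields the required smoothness of $(A^{-1}\partial_x)_\xi$ in $\xi$. Your kernel approach would also work but requires handling the composition with $\xi$ of the full $2l$-th order Green's function in one stroke, whereas the factorization is modular and reduces everything to the $l=1$ computation already done in \cite{suisses}. Note also that the paper establishes $C^1$ (not merely local Lipschitz) regularity of $f$, which is what is actually used downstream for Corollary~\ref{start3k}.
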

In the statement above the notation $(W^{2l-1,\infty} (\T) , \sigma_{H^{2l-1} (\T)} )$ and $( W^{2l-2,\infty} (\T) , \sigma_{H^{2l-2} (\T) } ))$ stands for the spaces $W^{2l-1,\infty} (\T)$ and $ W^{2l-2,\infty} (\T)$ endowed with the topologies of $H^{2l-1}(\T)$ and $H^{2l-2}(\T)$ respectively. \par
We will sketch the proof of this Theorem in Appendix for sake of completeness. 
Similarly to the case $l=1$ this proof also provides that  the flow
$\xi$ defined from  $ ( -T,T) \times \T$ to $\T$ by
\begin{equation*}
 \xi(t,x)  =  x + \int^{t}_0 u (s, \xi(s,x) ) ds 
\end{equation*}
is $C^{1}$ over $ ( -T,T) $ with values in $W^{2l-1,\infty} (\T)$. \par
The next result is the main one of the paper; it shows that  the  smoothness of the flow map is actually much better. 
\begin{Theorem} \label{start2k}
Assume that $u_0$ is in  $ W^{2l-1,\infty} (\T)$.
Then, with the notations above, the flow map $\xi$ is in the space $C^{\omega}((-T,T), W^{2l-1,\infty}(\T))$.
\end{Theorem}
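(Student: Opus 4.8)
The plan is to recast the problem as an autonomous ordinary differential equation in the Banach space $W^{2l-1,\infty}(\T)$ and to invoke an analyticity principle for such equations. Following the Lagrangian viewpoint already used for Theorem \ref{start1k}, I introduce the two unknowns $\eta := \xi - \Id$ and $w := u\circ\xi$ (the Lagrangian velocity), both lying in $W^{2l-1,\infty}(\T)$ as long as $\xi = \Id + \eta$ is a $W^{2l-1,\infty}$-diffeomorphism, i.e. $\partial_x\xi > 0$. Differentiating \eqref{flow} gives $\partial_t\eta = w$, while the nonlocal form of \eqref{CHorderl} yields $\partial_t w = (A_l^{-1}C_l(u))\circ\xi$. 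The crucial point is to rewrite this as a function of $(\eta,w)$ alone. Writing the right-hand side $A_l^{-1}C_l(u)$ as the sum of a local polynomial in $u,\partial_x u,\dots$ and of convolution terms $G_l^{(k)}*(\,\cdot\,)$ against quadratic expressions in $u$ and its derivatives, with $G_l$ the Green's function of $A_l$ (for $l=1$ it equals $-\partial_x P = -G_1'*(u^2+\tfrac12(\partial_x u)^2)$, with $G_1=\tfrac12 e^{-|\cdot|}$), then changing variables $y=\xi(z)$ and expressing each $(\partial_x^j u)\circ\xi$ through $w$, $\partial_x\xi$ and their derivatives by the chain rule, one obtains a closed autonomous system
\[
\partial_t\eta = w, \qquad \partial_t w = \cF(\eta,w),
\]
where $\cF$ is a finite sum of local terms and Lagrangian convolutions $\int_\T G_l^{(k)}(\xi(\cdot)-\xi(z))\,\Pi_k(z)\,dz$, $0\le k\le 2l-1$, the $\Pi_k$ being polynomial in $w$, its derivatives, $\partial_x\xi$ and $1/\partial_x\xi$. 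By uniqueness, the solution furnished by Theorem \ref{start1k} is the solution of this system.

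The abstract tool is the analytic Cauchy--Lipschitz theorem: if $\cF$ is real-analytic on an open subset of a Banach space $X$ with values in $X$, then the solution of $\dot y = \cF(y)$ is real-analytic in $t$. This is proved by complexifying $X$, extending $\cF$ holomorphically to a complex neighborhood, and solving $\dot y = \cF(y)$ for complex time by the contraction principle on a small complex disk, which produces a holomorphic — hence, on the real axis, real-analytic — trajectory. The whole of Theorem \ref{start2k} therefore reduces to showing that the vector field $(\eta,w)\mapsto(w,\cF(\eta,w))$ is real-analytic from the open set $\{\partial_x\xi>0\}\subset W^{2l-1,\infty}(\T)^2$ into $W^{2l-1,\infty}(\T)^2$.

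This analyticity rests on two ingredients. The algebraic one is routine: since $2l-1\ge 1$, the space $W^{2l-1,\infty}(\T)$ is a Banach algebra, so products are bounded multilinear — hence real-analytic — maps, and the inversion $\xi\mapsto 1/\partial_x\xi$ is real-analytic on $\{\partial_x\xi>0\}$; consequently every coefficient $\Pi_k$ depends analytically on $(\eta,w)$. The substantial ingredient is the analyticity, with values in $W^{2l-1,\infty}(\T)$, of each Lagrangian-convolution operator
\[
(\eta,g)\longmapsto\Big[\,x\mapsto\int_\T G_l^{(k)}\big(\xi(x)-\xi(z)\big)\,g(z)\,dz\,\Big].
\]
Here one exploits the explicit structure of $G_l$: it is real-analytic on $\T\setminus\{0\}$, of class $C^{2l-2}$ with a jump only in $G_l^{(2l-1)}$, and, because $A_l G_l=\delta$, its higher derivatives decompose into a Dirac mass at the origin plus bounded pieces. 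Since $\xi(x)-\xi(z)=(x-z)+(\eta(x)-\eta(z))$ depends affinely on $\eta$, the off-diagonal analyticity of $G_l$ transfers to the integrand; and when, in differentiating the operator in $x$ up to order $2l-1$ to control its $W^{2l-1,\infty}$ norm, a derivative of $G_l$ meets the Dirac mass, it contributes a purely local term that is itself an analytic function of $(\eta,g)$, while the genuinely nonlocal remainder retains an integrable kernel. Assembling these contributions shows the operator is real-analytic into $W^{2l-1,\infty}(\T)$, with no loss of derivative — the smoothing of $A_l^{-1}$ exactly balancing the differentiations, which is the whole point of passing to Lagrangian variables.

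The main obstacle is precisely this last verification: keeping track, at top order, of the Dirac and jump contributions of the kernels $G_l^{(k)}$ and checking that they organize themselves into analytic maps with values in $W^{2l-1,\infty}(\T)$ rather than producing a loss of one derivative. Granting it, the vector field $(\eta,w)\mapsto(w,\cF(\eta,w))$ is real-analytic on $\{\partial_x\xi>0\}$, so the analytic Cauchy--Lipschitz theorem applies and $t\mapsto(\eta(t),w(t))$ is real-analytic with values in $W^{2l-1,\infty}(\T)^2$. In particular $\xi=\Id+\eta$ belongs to $C^{\omega}((-T,T);W^{2l-1,\infty}(\T))$, which is the assertion of Theorem \ref{start2k}; the case $l=1$ is exactly Theorem \ref{start2}.
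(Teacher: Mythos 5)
Your strategy is correct but it is genuinely different from the paper's. The paper works in Eulerian variables: its core is the combinatorial commutator identity of Lemma \ref{P1k} (in the spirit of \cite{katoana,ogfstt}), which drives the induction of Proposition \ref{recuk} and yields Cauchy-type bounds $\|\partial_x^m D^k u\|_{L^\infty((-\tau,\tau)\times\T)}\leq 4\V_{m,k}$ for \emph{smooth} solutions; since $\partial_t^{k+1}\xi=(D^ku)\circ\xi$, these give the factorial estimates \eqref{indun} on all time derivatives of the flow, hence analyticity, and rough data are then reached by smooth approximation together with the continuity of the data-to-solution map in Theorem \ref{start1k}. You instead propose to upgrade the Lagrangian ODE used in the paper's appendix --- where, following \cite{suisses}, the vector field $f(\xi,v)$ is only shown to be $C^1$ --- to a \emph{real-analytic} vector field on an open set of $W^{2l-1,\infty}(\T)^2$, and then invoke the analytic Cauchy--Lipschitz theorem in a Banach space. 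This is essentially the mechanism of \cite{helvet,JNMP,KLT}, i.e.\ of the higher-regularity results that the paper says it extends, pushed down to $W^{2l-1,\infty}$. The trade-off: the paper's route gives explicit quantitative estimates (a lower bound on the radius of analyticity in terms of $\V$ and $L$), while your route, once completed, is conceptually cleaner, needs no approximation step (the ODE argument applies directly at low regularity), and gives in addition analytic --- not merely $C^\infty$ --- dependence of $\xi$ on $u_0$ with respect to the strong $W^{2l-1,\infty}$ norm, to be compared with Corollary \ref{start3k}.

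The caveat is that the one substantial step of your plan --- real-analyticity of the Lagrangian convolution operators $(\eta,g)\mapsto\int_\T G_l^{(k)}\bigl(\xi(\cdot)-\xi(z)\bigr)g(z)\,dz$ with values in $W^{2l-1,\infty}(\T)$ --- is ``granted'' rather than proven, and it is exactly where all the work of this approach lies. The mechanism you describe for it is the right one and it does go through: working on the fundamental domain $z\in(x,x+1)$ one has $\xi(x)-\xi(z)\in(-1,0)$, and the periodic Green's function restricted there is an \emph{entire} function of $\xi(x)-\xi(z)$ (for $l=1$, $G_1(y)=\cosh(\{y\}-\tfrac12)/(2\sinh\tfrac12)$), so expanding in powers of $\eta(x)-\eta(z)$ gives a convergent series of bounded polynomial maps of $\eta$, while at top order the jump of $G_l^{(2l-1)}$ (equivalently $A_lG_l=\delta$) produces boundary terms that are local and polynomial in $(\eta,g)$, as you assert. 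Two points of precision you should fix: the correct open set is $\{\operatorname{essinf}\partial_x\xi>0\}$, not $\{\partial_x\xi>0\}$ (pointwise positivity is neither an open condition nor sufficient to invert $\partial_x\xi$ in $L^\infty$); and on $\T$ one must use the periodic Green's function and this fundamental-domain representation rather than the line kernel $\tfrac12 e^{-|\cdot|}$, since sign/fractional-part factors are what make the composed kernel analytic in $\eta$ off the diagonal.
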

Then Corollary  \ref{start3} extends as follows.
\begin{Corollary} \label{start3k}
With the previous notations,  the map 
$u_0 \in  \mathcal V \mapsto \xi  \in C^{\omega } (  ( -T,T);  W^{2l-1,\infty} (\T))  $
is $C^{\infty}$.
\end{Corollary}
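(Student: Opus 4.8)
The plan is to read the corollary off from the proof of Theorem~\ref{start2k}, whose mechanism is the reformulation of the Lagrangian dynamics of \eqref{CHorderl} as an autonomous \emph{analytic} ordinary differential equation in a Banach space. Writing $v = u \circ \xi$ for the Lagrangian velocity and assembling $\xi$, $v$, $\partial_x \xi$ and the Lagrangian counterparts of the spatial derivatives of $u$ up to order $2l-1$ into a single unknown $Y$ with values in a Banach space $E$ (a finite product of spaces of the form $W^{k,\infty}(\T)$ with $k \leq 2l-1$, all of which are controlled by the datum $u_0 \in W^{2l-1,\infty}(\T)$), the equation takes the form
$$\frac{d}{dt} Y = \mathcal F(Y), \qquad Y(0) = Y_0(u_0),$$
with $\mathcal F \colon \mathcal O \to E$ real-analytic on an open set $\mathcal O \subset E$, and with $u_0 \mapsto Y_0(u_0)$ a bounded affine map $W^{2l-1,\infty}(\T) \to E$. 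The analyticity of $\mathcal F$ — which is the substance of Theorem~\ref{start2k} — rests on the observation that, after the change of variables, the smoothing operator $A_l^{-1}$ becomes an integral operator whose kernel depends analytically on $\xi$, while the remaining nonlinear terms are products and compositions, analytic in the Banach-algebra structure of the spaces $W^{k,\infty}(\T)$.

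Granting this, the corollary reduces to the statement that the flow of an analytic ODE depends analytically on its initial condition, jointly with time. I would complexify, extending $\mathcal F$ to a holomorphic map $\mathcal O_\C \to E_\C$ on a complex neighborhood, and invoke the holomorphic Cauchy--Lipschitz theorem: it yields a flow $\Phi(z, \widetilde Y_0)$ holomorphic in $(z, \widetilde Y_0)$ on a product $D(0,r) \times B(Y_0, \rho)$, with $r > 0$ that may be taken uniform as $\widetilde Y_0$ ranges over a fixed ball. Restricting $z$ to the real axis recovers the analytic-in-time trajectories of Theorem~\ref{start2k}. Covering a given compact subinterval of $(-T,T)$ by finitely many such discs (by analytic continuation along the real axis, the radius being locally bounded from below thanks to the a priori bounds on $Y$ already provided by Theorem~\ref{start1k}), one gets that $Y_0 \mapsto \Phi(\cdot, Y_0)$ is analytic from a neighborhood of $Y_0$ into $C^{\omega}((-T,T); E)$; the local Taylor expansion in $Y_0$ converges in the $C^{\omega}$-topology exactly because the radius of analyticity in $t$ is bounded below uniformly in $Y_0$.

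It then remains to compose the bounded affine map $u_0 \mapsto Y_0(u_0)$, the analytic solution operator $Y_0 \mapsto \Phi(\cdot, Y_0)$, and the bounded linear projection $E \to W^{2l-1,\infty}(\T)$ extracting the $\xi$-component; the composite $u_0 \mapsto \xi$ is then analytic, hence in particular $C^{\infty}$, from $\mathcal V$ into $C^{\omega}((-T,T); W^{2l-1,\infty}(\T))$, which is precisely the claim (and reduces to Corollary~\ref{start3} when $l=1$, since $\lip(\T) = W^{1,\infty}(\T)$). The step I expect to be delicate is not this final soft composition but the uniformity in the previous paragraph: one must guarantee that the complex existence time furnished by the holomorphic Cauchy--Lipschitz theorem stays bounded away from $0$ as $Y_0$ varies in a neighborhood and as $t$ sweeps a compact subset of $(-T,T)$, so that the data-to-solution map genuinely lands in, and is smooth into, $C^{\omega}((-T,T); W^{2l-1,\infty}(\T))$ rather than merely giving pointwise-in-$t$ smoothness.
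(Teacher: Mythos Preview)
Your argument rests on a mischaracterization of what Theorem~\ref{start2k} actually does. You write that its ``mechanism is the reformulation of the Lagrangian dynamics \ldots\ as an autonomous \emph{analytic} ordinary differential equation'' and that ``the analyticity of $\mathcal F$ --- which is the substance of Theorem~\ref{start2k}''. That is not what the paper proves. The proof of Theorem~\ref{start2k} proceeds by direct a priori estimates on the iterated material derivatives $\partial_x^m D^k u$ (Proposition~\ref{recuk}, via the commutator identity of Lemma~\ref{P1k}), yielding the factorial bounds \eqref{indun} on $\partial_t^{k+1}\xi$. Nowhere is it shown that the Lagrangian vector field is analytic as a map between Banach spaces; the Appendix only establishes that $f(\xi,v)$ is $C^1$. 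So the central input you invoke --- analyticity of $\mathcal F$ --- is not available from the paper, and your proof has a genuine gap unless you supply that analyticity yourself at the regularity $W^{2l-1,\infty}(\T)$.

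Your route (analytic vector field $\Rightarrow$ holomorphic Cauchy--Lipschitz $\Rightarrow$ joint analyticity in $(t,u_0)$) is the one taken in \cite{KLT,helvet,JNMP} at higher regularity, and it is plausible that it can be pushed down to $W^{2l-1,\infty}$; but that is an independent argument, not a corollary of Theorem~\ref{start2k}. The paper's own approach to Corollary~\ref{start3k} (deferred to \cite[Corollary~1]{ogfstt}) goes differently: one uses that the estimates \eqref{indun} are \emph{uniform} for $u_0$ in a neighborhood (since the constant depends only on $\|u\|_{L^\infty_t W^{2l-1,\infty}_x}$, which is controlled via Theorem~\ref{start1k}), so that $u_0\mapsto\xi$ lands in a fixed bounded set of $C^\omega$; then one combines this with $C^\infty$ dependence of the ODE flow on initial data --- obtained by upgrading the $C^1$ argument of the Appendix to $C^\infty$, which is routine since the same formulas give all Fr\'echet derivatives --- to conclude smoothness of $u_0\mapsto\xi$ into $C^\omega((-T,T);W^{2l-1,\infty}(\T))$. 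In short: the paper separates ``analytic in $t$'' (hard, via Kato-type estimates) from ``smooth in $u_0$'' (soft, via $C^\infty$ ODE theory), whereas you try to get both at once from a holomorphic ODE whose existence you have not established.
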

Three remarks are in order.
\begin{itemize}
\item This extends some earlier results by \cite{helvet,JNMP,KLT} where the initial data  $u_0$ was assumed to be much smoother.  
\item Theorem \ref{start2k} can also be seen as a counterpart of the results of analyticity of the trajectories for the incompressible Euler equation, see \cite{Serfati3}, \cite{ogfstt} and the references therein. In particular the proof of Corollary \ref{start3k} follows the same lines as the proof of \cite[Corollary 1]{ogfstt} and will be therefore omitted here.
\item Let us stress that the results above are only local in time, including the existence of solutions  ``\`a la de~Lellis-Kappeler-Topalov''.  
We refer here to the papers \cite{MZ,MZZ} about the issue of the finite time blow up of  some classical solutions (with a slightly different regularity) of the higher-order Camassa-Holm equations. In the case of the Camassa-Holm equations, when $l=1$, the issue of wave breaking in finite time is a longstanding feature, intimately related to  the interaction of  the peakons. We will discuss this issue more closely in Section \ref{picon}.
\end{itemize}
Finally let us mention here the papers \cite{Glass} and  \cite{Perrollaz} which deal with the issues of control and of stabilization of the Camassa-Holm equation. 
\section{Proof of Theorem \ref{start2k}}
This section is devoted to the proof of Theorem \ref{start2k}.
\subsection{Preliminary material}
Let us start with a few remarks. \par
\ \par
\noindent
{\bf Rewriting the equation.} In the sequel we will simply denote by $A$ the operator $A_l$ defined in  \eqref{cla}. Following \cite{chk} the equation \eqref{CHorderl} also reads:
\begin{equation} \label{CHk}
\partial_t u  + u \partial_x u + \partial_x P =0 , \quad A P = \mathcal{F} [u] ,
\end{equation}
where $ \mathcal{F} [u]$ is a differential polynomial in $u$ of order $2l-1$ of the form 
\begin{equation} \label{CHkF}
\mathcal{F} [u] :=  \sum_{0 \leq m_{1} + m_{2} \leq 2l - 1 } \mathfrak{c}_{m_{1},m_{2}} \, (\partial_x^{m_{1}} u  ) ( \partial_x^{m_{2}} u ) ,
\end{equation}
where the $\mathfrak{c}_{m_{1}, m_{2}}$ are some real numbers. \par
Denoting the material derivative
\begin{equation*}
D := \partial_{t} + u \partial_{x},
\end{equation*}
the equation \eqref{CHorderl} takes the form
\begin{equation*} 
Du=- \partial_x P, \quad A P = \mathcal{F} [u] .
\end{equation*}
\ \par
\noindent
{\bf Elliptic operators.} Let us consider several operators associated to $A$. \par
We define 
\begin{equation} \label{deflambda}
\xi_{j} := e^{ \frac{i \pi j}{l+1}} , \quad {\Lambda}_{\pm} :=   -i \partial_x \pm \xi_{l}   \text{  and  } \tilde{\Lambda} _{\pm} := {\Lambda}_{\mp}  \prod_{j=1}^{l-1} \Big( ( -i  \partial_x  -  \xi_{j})  (  -i \partial_x  + \xi_{j}) \Big) .
\end{equation}
Then ${\Lambda}_{\pm}$ is under the form 
\begin{equation} \label{combibi}
\tilde{\Lambda} _{\pm} :=  \sum_{0 \leq  m \leq 2l-1  } d_{m} \partial_{x}^{m} ,
\end{equation}
where the $d_{m}$ are some complex numbers,  and
\begin{equation} \label{secon}
-2i \partial_x =  {\Lambda}_{+} +  {\Lambda}_{-}   \text{  and  }
{A}  =  {\Lambda}_{+} \,  \tilde{\Lambda}_{+} =  {\Lambda}_{-} \,   \tilde{\Lambda}_{-} .
\end{equation}
Now the operators $A$ and $\tilde{\Lambda}_{\pm}$ have the following property.
\begin{Lemma} \label{LemElliptic2l-1}
The operator $A$ is bicontinuous from $W^{2l,\infty} (\T)$ to $L^{\infty} (\T )$.
The operators $\tilde{\Lambda}_{\pm}$ are bicontinuous from $W^{2l-1,\infty} (\T)$ to $L^{\infty} (\T )$. 
\end{Lemma}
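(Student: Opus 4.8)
The plan is to get the ``forward'' continuity for free and to obtain continuity of the inverses through the factorization \eqref{secon} into first-order operators, each of which can be inverted by an explicit integrable kernel. Since $A=\sum_{j=0}^{l}(-1)^{j}\partial_x^{2j}$ is a constant-coefficient differential operator of order $2l$ and, by \eqref{combibi}, $\tilde{\Lambda}_{\pm}=\sum_{0\le m\le 2l-1}d_m\partial_x^{m}$ is one of order $2l-1$, the bounds $\|Au\|_{L^\infty(\T)}\le C\|u\|_{W^{2l,\infty}(\T)}$ and $\|\tilde{\Lambda}_{\pm}u\|_{L^\infty(\T)}\le C\|u\|_{W^{2l-1,\infty}(\T)}$ are immediate. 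Hence all that has to be shown is that the inverses exist and are bounded as maps $L^\infty(\T)\to W^{2l,\infty}(\T)$ and $L^\infty(\T)\to W^{2l-1,\infty}(\T)$ respectively.

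The basic building block I would isolate is the first-order operator $L_c:=-i\partial_x-c$, of symbol $k-c$, for a complex number $c\notin\Z$. I claim that $L_c$ is a bicontinuous isomorphism from $W^{1,\infty}(\T)$ onto $L^\infty(\T)$. Given $f\in L^\infty(\T)$, solving $L_cu=f$ is the same as solving the linear ODE $u'-icu=if$ on the circle; since $e^{2\pi ic}\ne1$ this has a unique periodic solution, namely the convolution of $f$ with the periodic Green's function $G_c$ of $L_c$. On a fundamental domain $G_c$ is proportional to $e^{icx}$ and is continuous up to a single jump, hence $G_c\in L^\infty(\T)\subset L^1(\T)$, so that $\|u\|_{L^\infty}\le\|G_c\|_{L^1}\|f\|_{L^\infty}$. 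Feeding this back into $u'=icu+if$ gives $\|u'\|_{L^\infty}\le C\|f\|_{L^\infty}$, so $L_c^{-1}$ maps $L^\infty(\T)$ boundedly into $W^{1,\infty}(\T)$; and since $L_c$ has constant coefficients, $L_c^{-1}$ commutes with $\partial_x$ and therefore maps $W^{k,\infty}(\T)$ boundedly into $W^{k+1,\infty}(\T)$ for every $k\ge0$, i.e. it gains exactly one derivative.

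It then remains to assemble these blocks. Every first-order factor occurring in \eqref{deflambda}--\eqref{secon} is of the form $L_c$ with $c\in\{\pm\xi_j:1\le j\le l\}$, and since $\xi_j=e^{i\pi j/(l+1)}$ has argument in $(0,\pi)$ each such $c$ lies off the real axis, so in particular $c\notin\Z$ and the preceding block applies. Being translation-invariant, all these factors commute, so $\tilde{\Lambda}_{\pm}^{-1}$ is the composition of the $2l-1$ inverses $L_c^{-1}$ (one from $\Lambda_{\mp}$ and $2(l-1)$ from the factors $(-i\partial_x-\xi_j)(-i\partial_x+\xi_j)$), each gaining one derivative; the composition is therefore bounded from $L^\infty(\T)$ into $W^{2l-1,\infty}(\T)$. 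In the same way $A^{-1}=\tilde{\Lambda}_{+}^{-1}\Lambda_{+}^{-1}$ is a composition of $2l$ such inverses, bounded from $L^\infty(\T)$ into $W^{2l,\infty}(\T)$. This yields the two claimed bicontinuities.

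The one point I expect to require care is precisely that $L^\infty$ is hostile to Fourier-analytic shortcuts: the symbol $(ik)^{2l}/\sum_{j=0}^{l}k^{2j}$ of $\partial_x^{2l}A^{-1}$ is bounded, yet this does not entail $L^\infty$-boundedness of the operator, so one cannot argue by Plancherel or by a Mikhlin-type multiplier theorem. The factorization into first-order pieces is exactly what avoids this difficulty, since it reduces every estimate to the elementary circle ODE above, whose Green's function is a genuine $L^1$ kernel and whose convolution is automatically bounded on $L^\infty$. Everything else---checking $c\notin\Z$ for each factor and counting the gained derivatives---is routine bookkeeping.
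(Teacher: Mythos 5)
Your proof is correct and follows essentially the same route as the paper: the paper likewise reduces everything to the bicontinuity of the first-order factors $-i\partial_x-\xi$, $\xi\in\C\setminus 2\pi\Z$, from $W^{j+1,\infty}(\T)$ to $W^{j,\infty}(\T)$ (its Lemma~\ref{LemElliptic1}, proved via the same explicit periodic ODE solution that your Green's function encodes), and then obtains Lemma~\ref{LemElliptic2l-1} by composing the factors in \eqref{deflambda}--\eqref{secon}. The only cosmetic difference is your period convention ($c\notin\Z$ rather than $\xi\notin 2\pi\Z$), which is harmless since the relevant constants $\pm\xi_j$ are non-real.
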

In particular this yields that when $u \in  W^{2l-1,\infty} (\T)$ then $ \mathcal{F} [u] $ is in $L^\infty (\T)$ and  $ \partial_x P $ is in $W^{2l-1,\infty} (\T)$. \par
\ \par
Lemma \ref{LemElliptic2l-1} is an immediate consequence from the following elementary one.
\begin{Lemma} \label{LemElliptic1}
Let $\xi \in \C \setminus 2 \pi \Z$  and $j \in \N$. Then the operator 
$-i \partial_x -  \xi$ is bicontinuous from $W^{j+1,\infty} (\T)$ to $W^{j,\infty} (\T)$.
\end{Lemma}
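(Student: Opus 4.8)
The plan is to prove that the operator $-i\partial_x - \xi$ is bicontinuous from $W^{j+1,\infty}(\T)$ to $W^{j,\infty}(\T)$ whenever $\xi \in \C \setminus 2\pi\Z$. The key observation is that the condition $\xi \notin 2\pi\Z$ is precisely what guarantees invertibility: the Fourier symbol of $-i\partial_x - \xi$ acting on the periodic mode $e^{inx}$ is $n - \xi$, which vanishes only when $\xi \in \Z$, and the periodicity convention forces the relevant exclusion set to be $2\pi\Z$ once the period of $\T$ is accounted for. (I note in passing that, as written, the statement should be read with the period of $\T$ normalized so that the integer frequencies are the ones ruled out; the argument below is insensitive to this normalization.)

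\medskip

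First I would establish continuity of the operator itself, which is the easy direction. For $u \in W^{j+1,\infty}(\T)$ we have $(-i\partial_x - \xi)u = -i u' - \xi u$, and the $W^{j,\infty}$ norm is controlled by
\begin{equation*}
\| (-i\partial_x - \xi) u \|_{W^{j,\infty}} \leq \| u' \|_{W^{j,\infty}} + |\xi| \, \| u \|_{W^{j,\infty}} \leq (1 + |\xi|) \| u \|_{W^{j+1,\infty}} .
\end{equation*}
So the forward map is bounded. The substance of the lemma is the existence and boundedness of the inverse, i.e.\ solving the first-order linear ODE $-i v' - \xi v = f$ on $\T$ for given $f \in W^{j,\infty}(\T)$ and showing the unique periodic solution $v$ lies in $W^{j+1,\infty}(\T)$ with norm controlled by $\| f \|_{W^{j,\infty}}$.

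\medskip

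Next I would write the inverse explicitly. The equation $v' = i\xi v + i f$ is a scalar linear ODE whose integrating factor is $e^{-i\xi x}$, and imposing periodicity $v(x+2\pi) = v(x)$ determines the constant of integration uniquely because $e^{-2\pi i \xi} \neq 1$ — this is exactly the hypothesis $\xi \notin 2\pi\Z$ (equivalently $\xi \notin \Z$ under the unnormalized reading). The resulting solution has the form
\begin{equation*}
v(x) = \frac{i}{1 - e^{-2\pi i \xi}} \int_0^{2\pi} e^{-i\xi s} f(x+s) \, ds ,
\end{equation*}
a convolution of $f$ against a bounded smooth kernel. From this integral representation one reads off boundedness from $L^\infty$ to $L^\infty$ directly, and then the relation $v' = i\xi v + if$ lets one bootstrap: each derivative of $v$ up to order $j$ is expressed through lower-order derivatives of $v$ and derivatives of $f$ of order at most $j$, so by induction $\| v \|_{W^{j+1,\infty}} \lesssim_{\xi} \| f \|_{W^{j,\infty}}$, giving boundedness of the inverse.

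\medskip

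The main obstacle, such as it is, lies in handling the constant $1 - e^{-2\pi i \xi}$ carefully when $\xi$ is complex: one must confirm it is nonzero precisely under the stated hypothesis and track its size in the norm estimates, since the operator norm of the inverse blows up as $\xi$ approaches the excluded set. This is a genuine but purely computational point. Once the explicit kernel is in hand, continuity of the inverse follows from Young's inequality for the convolution and the differentiation relation, and no further analytic input (such as Fourier series convergence, which would be delicate in $L^\infty$) is required — working directly with the integrating-factor formula sidesteps any issue about $L^\infty$ not being well-suited to Fourier multiplier arguments.
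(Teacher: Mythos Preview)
Your argument is correct and follows essentially the same route as the paper: both solve the first-order ODE via the integrating factor $e^{-i\xi x}$, use the hypothesis $\xi \notin 2\pi\Z$ to pin down the unique periodic constant of integration, and read off boundedness of the inverse from the explicit formula. The only cosmetic difference is that you recast the variation-of-constants solution as a convolution (making the $L^\infty$ bound especially transparent) and bootstrap via $v' = i\xi v + i f$, whereas the paper stays with the integral form \eqref{varcsol} directly; up to a harmless sign in your convolution kernel, the two presentations are equivalent.
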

\begin{proof}[Proof of Lemma \ref{LemElliptic1}]
It is clear that  the operator $-i \partial_x - \xi$ is continuous from $W^{j+1,\infty} (\T)$ to $W^{j,\infty} (\T)$.
In order to prove that it is one-to-one, let us solve the equation in $f$:
\begin{eqnarray} \label{varc}
(-i \partial_x -  \xi) f = g ,
\end{eqnarray}
where $g \in W^{j,\infty} (\T)$ is given. 
The functions $f$ solution to \eqref{varc} are given by
\begin{eqnarray} \label{varcsol}
-i f(x) = C e^{i\xi x} +e^{i\xi x} \int_{0}^{x} g(x') e^{-i\xi x'} \, dx' ,
\end{eqnarray}
for some $C \in \R$.
Now to be one-periodic the function $f$ must satisfy $f(0)=f(1)$ so that
\begin{eqnarray*}
C (1-e^{i\xi }) =  e^{i\xi } \int_{0}^{1} g(x') e^{-i\xi x'} \, dx' ,
\end{eqnarray*}
which defines $C$ in a unique way since $\xi \in \C \setminus 2 \pi \Z$.
Now it is clear from the formula \eqref{varcsol} that the operator $(-i \partial_x - \xi)$ has a continuous inverse from $W^{j,\infty} (\T)$ to $W^{j+1,\infty} (\T)$.
\end{proof}
\ \par
\noindent
{\bf A formal identity.} We will use the following identity, which details the lack of commutation between  $D^{k}$ and $\partial_x^m$, for $m \in \N^*$. This approach is inspired by \cite{katoana}.
\begin{Lemma} \label{P1k}
For $k \in \N^*$, for  $m \in \N^*$, we have for smooth functions $u, \psi \in C^{\infty}((-T,T) \times \T;\R)$,
\begin{equation}\label{P1fNewk}
\partial_x^m D^k \psi = D^k  \partial_x^m  \psi + F^{k,m} [u,\psi ] \ \text{ where } \
 F^{k,m} [u,\psi] := \sum_{\gamma    \in \mathcal{B}_{k,m}  } c_{k,m} (\gamma   )  \,  f(\gamma)  [u,\psi],
\end{equation}
where
\begin{align*}
\mathcal{B}_{k,m} := \big\{ \gamma = (s, \alpha, \beta) \ \big/ \  
& s \in \N, \ \ 2 \leqslant s \leqslant k+1, \\
& \alpha = (\alpha_1, \ldots, \alpha_s) \in \N^s, \ \ | \alpha | = k+1 - s, \\
& \beta = (  \beta_1,\ldots,  \beta_s ) \in  (\N^*)^s, \ \ | \beta | = m + s - 1 \big\},
\end{align*}
\begin{equation*}
f( \gamma)  [u,\psi] : = \partial^{\beta_1}_x D^{\alpha_1} u \cdot \ldots \cdot \partial^{\beta_{s-1}}_x D^{\alpha_{s-1 } } u \cdot \partial^{\beta_{s}}_x D^{\alpha_s} \psi,
\end{equation*}
and where the $c_{k,m} (\gamma  )$ are integers satisfying 
\begin{equation} \label{EstCkm}
|c_{k,m} ( \gamma) | \leqslant {(2s)}^{2(m-1)} \frac{k! m!}{\alpha ! \beta !}.
\end{equation}
\end{Lemma}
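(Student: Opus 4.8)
The identity in Lemma~\ref{P1k} is purely algebraic — it holds for smooth $u,\psi$ with no analytic input — so its entire content is a combinatorial bookkeeping together with the coefficient bound \eqref{EstCkm}. The plan is to argue by induction on $k$, keeping $m\in\N^*$ as a free parameter throughout. The one ingredient needed is the way a block $\partial_x^m$ crosses a single material derivative $D=\partial_t+u\partial_x$: by Leibniz's rule, for every smooth $\phi$,
\[
\partial_x^m D\phi = D\,\partial_x^m\phi + \sum_{j=1}^m\binom{m}{j}(\partial_x^j u)\,\partial_x^{m-j+1}\phi ,
\]
the case $m=1$ being $[\partial_x,D]=(\partial_x u)\partial_x$. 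Equivalently $D\,\partial_x^{\beta} = \partial_x^{\beta}D - \sum_{j=1}^{\beta}\binom{\beta}{j}(\partial_x^j u)\,\partial_x^{\beta-j+1}$, which is precisely what lets me push a $D$ to the right inside any single factor $\partial_x^{\beta}D^{\alpha}w$.

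Applying the displayed formula with $\phi=D^k\psi$, and then the induction hypothesis at level $(k,m)$ to rewrite $D\,\partial_x^m D^k\psi = D^{k+1}\partial_x^m\psi + D\,F^{k,m}$, yields the clean recursion
\[
F^{k+1,m}[u,\psi] = D\,F^{k,m}[u,\psi] + \sum_{j=1}^m\binom{m}{j}(\partial_x^j u)\,\partial_x^{m-j+1}D^k\psi .
\]
The initialization $F^{0,m}=0$ is forced (the set $\mathcal{B}_{0,m}$ is empty, as it would require $2\le s\le 1$), so the recursion generates everything from $k=0$ on, recovering in particular $F^{1,m}=\sum_{j}\binom{m}{j}(\partial_x^j u)\partial_x^{m-j+1}\psi$. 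In the last sum each summand is already an admissible monomial $f(\gamma)$ with $s=2$. In the first term I expand $D$ as a derivation over the $s$ factors of each $f(\gamma)$ and restore the normal form (all $D$'s to the right of all $\partial_x$'s inside each factor) via $D(\partial_x^{\beta}D^{\alpha}w)=\partial_x^{\beta}D^{\alpha+1}w-\sum_{j=1}^{\beta}\binom{\beta}{j}(\partial_x^j u)\,\partial_x^{\beta-j+1}D^{\alpha}w$.

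I would then verify that the recursion stays inside $\mathrm{span}\{f(\gamma):\gamma\in\mathcal{B}_{k+1,m}\}$ by tracking the two conserved counters. A leading piece $\partial_x^{\beta}D^{\alpha+1}w$ raises one $\alpha_i$ by one at fixed $s,\beta$, consistent with $|\alpha|=k+1-s\mapsto k+2-s$; a commutator piece splits one factor into two (a new $u$-factor $\partial_x^j u$ of vanishing $D$-order and a remainder $\partial_x^{\beta-j+1}D^{\alpha}w$), raising $s$ and $|\beta|$ by one while leaving $|\alpha|$ unchanged, matching $|\alpha|=(k+1)+1-s'$, $|\beta|=m+s'-1$ at the new length $s'=s+1$; and the explicit sum contributes only $s'=2$ monomials. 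Reading off coefficients gives $c_{k+1,m}(\gamma')$ as an explicit finite sum: a transport part $\sum_{i:\alpha'_i\ge 1}c_{k,m}(\gamma'\!\downarrow_i)$ (raising one $\alpha_i$), a splitting part weighted by the $\binom{\beta_i}{j}$, and, for $s'=2$, the binomial $\binom{m}{j}$.

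The delicate point, and the real obstacle, is propagating \eqref{EstCkm}. The transport part is exactly what upgrades $k!$ to $(k+1)!$: using $1/(\alpha'\!\downarrow_i)!=\alpha'_i/\alpha'!$ and $\sum_i\alpha'_i=|\alpha'|=k+2-s'$, it is bounded by $(2s')^{2(m-1)}\frac{k!\,m!}{\alpha'!\beta'!}(k+2-s')$, i.e.\ it already reaches the target $(2s')^{2(m-1)}\frac{(k+1)!\,m!}{\alpha'!\beta'!}$ up to a deficit of $(s'-1)$ units of $(2s')^{2(m-1)}\frac{k!\,m!}{\alpha'!\beta'!}$. The whole shape of the prefactor is designed so that this leftover room, together with the gain $\big(s'/(s'-1)\big)^{2(m-1)}$ obtained when the length grows from $s'-1$ to $s'$, absorbs the binomial weights produced by the splittings: per splitting the ratio to the target collapses to $(\beta_i-j+1)\big(\tfrac{s'-1}{s'}\big)^{2(m-1)}/(k+1)$, and the factor $\big(\tfrac{s'-1}{s'}\big)^{2(m-1)}$ decays in $m$ fast enough to beat the $O(m)$ size of $\beta_i-j+1$ and of the number of splittings feeding a given $\gamma'$. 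Carrying this accounting through — with the correct multiplicities coming from the ordering of the factors, and uniformly in $m$ — is where all the work lies; the exponent $2(m-1)$ and the base $2s$ are chosen precisely so that the induction closes, and one checks it is essentially sharp (for the $s'=2$, $m=1$ monomials the bound is saturated).
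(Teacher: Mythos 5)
Your route is genuinely different from the paper's: you induct on $k$, using the full Leibniz commutation of $\partial_x^m$ across a single $D$, whereas the paper inducts on $m$, taking the case $m=1$ (Lemma~\ref{P1}, imported from \cite{ogfstt}) as both base case and engine. Your recursion $F^{k+1,m}[u,\psi]=D\,F^{k,m}[u,\psi]+\sum_{j=1}^m\binom{m}{j}(\partial_x^ju)\,\partial_x^{m-j+1}D^k\psi$ is correct, and your bookkeeping of how transport terms ($\alpha_i\mapsto\alpha_i+1$), splitting terms ($s\mapsto s+1$, $|\beta|\mapsto|\beta|+1$) and the explicit $s=2$ terms stay inside $\mathcal{B}_{k+1,m}$ is right; this algebraic half of the lemma is fully established by your argument, and it is even more self-contained than the paper's, since it re-derives rather than quotes the $m=1$ case.

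The gap is exactly where you yourself locate it, and it is a real one: the propagation of \eqref{EstCkm} is asserted, not proven, and the mechanism you invoke (``the decay of $((s'-1)/s')^{2(m-1)}$ in $m$ beats the $O(m)$ factors'') cannot close the induction as stated. Your induction on $k$ has no slack in $k$: as you note, the transport terms already consume the upgrade $k!\to(k+1)!$ up to a deficit of exactly $s'-1$, so all splitting and explicit contributions must fit into $s'-1$ with no margin; and at $m=1$ your decay factor equals $1$, so ``decay in $m$'' provides nothing there. With the naive count implicit in your sketch (total splitting weight $\le\sum_p\beta'_{p+1}\le|\beta'|=m+s'-1$), the required inequality at $m=1$ becomes $s'\le s'-1$, which is false. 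The induction does close, but only with two refinements you do not make: (i) splittings feed only monomials with $s'\ge3$ (their source has length $s'-1\ge2$, and $\mathcal{B}_{k,m}$ contains no $s=1$ terms), while the explicit Leibniz term feeds only $s'=2$, so these two extra contributions never hit the same $\gamma'$; (ii) since the new $u$-factor produced by a splitting is never the last factor, the index $\beta'_1$ never occurs among the weights $\beta'_{p+1}$, giving the refined bound $\sum_p\beta'_{p+1}\le|\beta'|-\beta'_1\le m+s'-2$. With these, the inequality to check for $s'\ge3$ is $((s'-1)/s')^{2(m-1)}(m+s'-2)\le s'-1$, which holds for all $s'\ge2$, $m\ge1$ (e.g.\ via $\ln(1+x)\le x$), with equality at $m=1$; for $s'=2$ one instead needs $m-j+1\le 16^{m-1}$, again with equality at $m=1$. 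This is worth contrasting with the paper: inducting on $m$ buys a fresh factor $(2s)^2$ of room at each step, which comfortably absorbs the $O(s(m+s))$ growth of all three contributions, so the paper's estimate closes crudely; your induction on $k$ buys no room at all, which is why the accounting you deferred as ``where all the work lies'' is not routine but is precisely the proof, and, implemented with the bounds actually written in your sketch, it fails.
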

Above, for a multi-index $\alpha$, we used the notations
\begin{equation*}
| \alpha | := \alpha_1 + \ldots+  \alpha_s \ \text{ and } \  \alpha ! :=  \alpha_1 ! \ldots \alpha_s !. 
\end{equation*}
\begin{Remark}
The writing of $f(\gamma)[u,\psi]$ is not unique in the sense that one can have $f(\gamma)[u,\psi]=f(\gamma')[u,\psi]$ with $\gamma \neq \gamma'$. However, it will be easier in the proof not to regroup identical terms and not to use the commutativity of the multiplication. We will only rely on Leibniz's rule and on
\begin{equation*}
\partial_{x} D = D \partial_{x} + (\partial_{x} u) \partial_{x}.
\end{equation*}
\end{Remark}
\begin{proof}[Proof of Lemma \ref{P1k}]
We proceed by iteration on $m$. \par
\noindent
{\bf 1.} The case $m=1$, which corresponds to the  commutation between  $D^{k}$ and $\partial_x$ is given by the following lemma.
Since it can be straightforwardly adapted from \cite[Proposition 6]{ogfstt}, its proof is omitted.
\begin{Lemma} \label{P1}
For $k \in \N^*$, we have in $\T$, for we have for smooth functions $u, \psi \in C^{\infty}((-T,T) \times \T;\R)$,
\begin{eqnarray*}
\partial_x D^k \psi = D^k  \partial_x  \psi + F^k [u,\psi ] \ \text{ with } \
F^k [u,\psi] := \sum_{\theta   \in \mathcal{A}_{k}  } c_k (\theta  )  \,  f(\theta)  [u,\psi], 
\end{eqnarray*}
where
\begin{equation*}
\mathcal{A}_{k} := \big\{ \theta  := (s, \alpha) \ \big/ \ 
s \in \N \ \text{ with } \ 2 \leqslant s  \leqslant k+1, \ \ 
\alpha = ( \alpha_1,\ldots, \alpha_s )  \in \N^s \ \text{ with } \ |\alpha| = k+1 - s \big\}, 
\end{equation*}
\begin{equation*}
f( \theta)  [u,\psi] : = \partial_x D^{\alpha_1} u \cdot \ldots \cdot \partial_x D^{\alpha_{s-1 } } u \cdot \partial_x D^{\alpha_s} \psi  ,
\end{equation*}
and where the $c_k (\theta )$ are integers satisfying
\begin{equation} \label{EstCk}
|c_k ( \theta) | \leqslant \frac{k ! }{\alpha !}.
\end{equation}
\end{Lemma}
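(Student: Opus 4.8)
To establish Lemma \ref{P1}, the plan is to argue by induction on $k$, the whole argument resting on the elementary commutation relation $\partial_x D = D\partial_x + (\partial_x u)\partial_x$ recorded in the remark above, which follows in one line from $D = \partial_t + u\partial_x$. Throughout I write $e_i$ for the $i$-th basis multi-index and, following that remark, I keep the factors of every monomial in a fixed order and never regroup them, so that each monomial produced below corresponds to a single $\theta$. First I would dispatch the base case $k=1$: the relation applied to $\psi$ gives $\partial_x D\psi = D\partial_x\psi + (\partial_x u)(\partial_x\psi)$, and the lone term $(\partial_x u)(\partial_x\psi)$ is exactly $f(\theta)[u,\psi]$ for the unique element $\theta = (2,(0,0))$ of $\mathcal{A}_1$, with coefficient $1$, saturating the bound $1!/(0!\,0!) = 1$.

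For the inductive step I would assume the identity and the bound \eqref{EstCk} at rank $k$, apply the commutation relation to the function $D^k\psi$, and then substitute the inductive hypothesis $\partial_x D^k\psi = D^k\partial_x\psi + F^k[u,\psi]$; since $D$ commutes with $D^k$ this produces the clean recursion
\[
F^{k+1}[u,\psi] = D F^k[u,\psi] + (\partial_x u)\,(\partial_x D^k\psi).
\]
It then remains to expand the right-hand side and to read off the coefficients.

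To expand $DF^k$ I would use Leibniz's rule, so that $D$ falls on exactly one factor of each monomial; on a factor $\partial_x D^{\alpha_i}v$ (with $v=u$ or $v=\psi$) the relation, read as $D\partial_x = \partial_x D - (\partial_x u)\partial_x$, yields
\[
D\bigl(\partial_x D^{\alpha_i}v\bigr) = \partial_x D^{\alpha_i+1}v - (\partial_x u)\bigl(\partial_x D^{\alpha_i}v\bigr).
\]
The first term raises $\alpha_i$ by one while leaving $s$ fixed, turning $|\alpha| = k+1-s$ into $k+2-s$; the second inserts a new factor $\partial_x u = \partial_x D^0u$ just before the $i$-th one, raising $s$ to $s+1$ and leaving $|\alpha|$ unchanged, equal to $k+2-(s+1)$. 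In both cases the outcome is an $f(\theta')$ with $\theta'\in\mathcal{A}_{k+1}$, and the extra term $(\partial_x u)(\partial_x D^k\psi)$ is itself $f(\theta')$ for $\theta' = (2,(0,k))$. This both confirms the announced form of $F^{k+1}$ and yields the coefficient recursion: a target $(s,\alpha')$ with $s$ unchanged receives $\sum_{i:\,\alpha'_i\geq 1} c_k(s,\alpha'-e_i)$, plus $1$ in the single case $(2,(0,k))$, whereas a target $(s+1,\alpha')$ receives $-c_k(s,\cdot)$ over the at most $s$ ways of deleting from $\alpha'$ a zero lying in a $u$-factor slot. The bound \eqref{EstCk} at rank $k+1$ then drops out of the identity $k!/(\alpha'-e_i)! = \alpha'_i\,k!/\alpha'!$: in the first case $\sum_i\alpha'_i = |\alpha'| = k+2-s\leq k$ and $\alpha'!\leq|\alpha'|!\leq k!$, so that
\[
|c_{k+1}(s,\alpha')| \leq \frac{k!}{\alpha'!}\,|\alpha'| + 1 \leq \frac{k!}{\alpha'!}\bigl(|\alpha'|+1\bigr)\leq \frac{(k+1)!}{\alpha'!};
\]
in the second case each source contributes $k!/\alpha'!$ (the deleted entry being a zero) and there are at most $s\leq k+1$ of them, whence $|c_{k+1}(s+1,\alpha')|\leq (k+1)!/\alpha'!$.

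I expect the main obstacle to be exactly this bookkeeping: one must match the two outcomes of the commutator --- raising a derivative order versus spawning a fresh $\partial_x u$ factor --- with the two families of elements of $\mathcal{A}_{k+1}$, keeping the factors ordered so that no monomial is double counted, and then verify that the factorial weights recombine precisely through $k!/(\alpha'-e_i)! = \alpha'_i\,k!/\alpha'!$. Everything else is routine; the constraint $2\leq s\leq k+1$ is what makes these final estimates close, and the case $k=2$ already shows them to be sharp.
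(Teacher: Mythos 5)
Your overall strategy is sound and is in fact the expected one: the paper itself omits the proof of Lemma \ref{P1}, referring to \cite[Proposition 6]{ogfstt}, and its own proof of the generalization (Lemma \ref{P1k}) runs on exactly the same inductive bookkeeping that you set up. Your base case, the recursion $F^{k+1}[u,\psi]=DF^k[u,\psi]+(\partial_x u)(\partial_x D^k\psi)$, the identification of the two outcomes of $D(\partial_x D^{\alpha_i}v)=\partial_x D^{\alpha_i+1}v-(\partial_x u)(\partial_x D^{\alpha_i}v)$ with elements of $\mathcal{A}_{k+1}$ (raising an $\alpha_i$ versus inserting a fresh $\partial_x u$ factor), and the key identity $k!/(\alpha'-e_i)!=\alpha'_i\,k!/\alpha'!$ are all correct, as is your observation that keeping factors ordered makes the multiplicity count of insertions match the count of deletable zeros.

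There is, however, one genuine flaw in the final estimate: you treat targets ``with $s$ unchanged'' and targets ``with $s$ raised'' as if they were disjoint, and bound each family of contributions separately by $(k+1)!/\alpha'!$. But a single $\theta'=(s',\alpha')\in\mathcal{A}_{k+1}$ can receive contributions of \emph{both} kinds as soon as $s'\geq 3$, some $\alpha'_i\geq 1$, and $\alpha'$ has a zero in a $u$-slot; for instance $(3,(0,0,1))\in\mathcal{A}_3$ is reached both by raising from $(3,(0,0,0))\in\mathcal{A}_2$ and by inserting a $\partial_x u$ factor into $(2,(0,1))\in\mathcal{A}_2$. Adding your two separate bounds would then only give $2(k+1)!/\alpha'!$, not the claimed bound \eqref{EstCk} at rank $k+1$. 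The repair is immediate, and your own counts already contain it: the raising contributions total at most $|\alpha'|\,k!/\alpha'!=(k+2-s')\,k!/\alpha'!$, the insertion contributions at most $(s'-1)\,k!/\alpha'!$ (your ``at most $s$'' with $s=s'-1$), and since
\[
(k+2-s')+(s'-1)=k+1,
\]
the sum is at most $(k+1)!/\alpha'!$; the exceptional $+1$ arises only for $\theta'=(2,(0,k))$, where $s'=2$ rules out insertions, and there $k\cdot k!/k!+1=k+1=(k+1)!/\alpha'!$ exactly, consistent with the sharpness you observe at $k=2$. Note that the paper's proof of Lemma \ref{P1k} is careful on precisely this point: it estimates the several kinds of contributions $N_a$, $N_b$, $N_c$ to one and the same coefficient and sums them before comparing with the target bound. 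With that one correction your argument is complete.
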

Hence to conclude in the case $m=1$, it suffices to set $ F^{k,1} [u,\psi] := F^k [u,\psi]$ and to observe that $\gamma  := (s, \alpha , \beta)   \in \mathcal{B}_{k,1}$ means exactly that $\theta := (s, \alpha) $ belongs to $\mathcal{A}_{k}$ and $\beta = (1,\ldots,1)$. \par
\ \par
\noindent
{\bf 2.}
Now let us assume that  Lemma \ref{P1k} holds true up to order $m$ and let us prove that it also holds at  order $m+1$.
We will proceed in two steps. First we are going to prove an identity of the form \eqref{P1fNewk}, then we will estimate the coefficients involved in this expression. \par
\ \par
\noindent
{\bf a.} First, we observe that, using Lemma \ref{P1k} at order $m$  and Lemma \ref{P1} with $\partial_x^{m} \psi$ instead of $ \psi$, we get
\begin{eqnarray}
\nonumber
\partial_x^{m+1} D^k \psi &=& \partial_x \big( D^k  \partial_x^{m}  \psi + F^{k,m} [u,\psi ] \big), \\
\nonumber 
&= & D^k \partial_x^{m+1} \psi  + F^k [u,  \partial_x^{m} \psi ] +  \partial_x F^{k,m} [u,\psi ] .
\end{eqnarray}
Moreover
\begin{equation*}
F^k [u,  \partial_x^{m} \psi ] =  \sum_{\tilde{\theta} \in \mathcal{A}_{k}} c_k (\tilde{\theta})  \,  f(\tilde{\theta})  [u, \partial_x^{m}  \psi] 
\end{equation*}
and for $\tilde{\theta}=(\tilde{s},\tilde{\alpha})$,
\begin{eqnarray*}
f(\tilde{\theta})  [u, \partial_x^{m}  \psi] &=&  \partial_x D^{ \tilde{\alpha}_1 } u \cdot \ldots \cdot \partial_x D^{ \tilde{\alpha}_{\tilde{s}-1} } u \cdot \partial_x D^{ \tilde{\alpha}_{\tilde{s}} }  \partial_x^{m}  \psi  \\
& =& \partial_x D^{\tilde{\alpha}_1} u \cdot \ldots \cdot \partial_x D^{\tilde{\alpha}_{\tilde{s}-1} } u \cdot  \partial_x^{m+1}   D^{\tilde{\alpha}_s} \psi 
+  \partial_x D^{\tilde{\alpha}_1} u \cdot \ldots \cdot \partial_x D^{ \tilde{\alpha}_{\tilde{s}-1} } u \cdot  \partial_x  F^{\tilde{\alpha}_{\tilde{s}} ,m} [u,\psi] ,
\end{eqnarray*}
using again Lemma \ref{P1k} at order $m$ with $\tilde{\alpha}_{\tilde{s}}$ instead of $k$. \par
It follows that the commutator of $\partial_x^{m+1}$ and $D^k$ can be described as
\begin{align} 
\partial_x^{m+1} D^k \psi - D^k \partial_x^{m+1} \psi
\nonumber
&= \sum_{\tilde{\theta} \in \mathcal{A}_{k}}  c_k (\tilde{\theta})  \,  \partial_x D^{\tilde{\alpha}_1} u \cdot \ldots \cdot 
\partial_x D^{ \tilde{\alpha}_{\tilde{s}-1} } u \cdot  \partial_x^{m+1} D^{ \tilde{\alpha}_{\tilde{s}} } \psi  \\
\nonumber
&+ \sum_{\tilde{\theta}=(\tilde{s},\tilde{\alpha}) \in \mathcal{A}_{k}  } \sum_{\tilde{\gamma} \in {\mathcal B}_{\tilde{\alpha}_{\tilde{s}},m} }
c_k (\tilde{\theta}) \, c_{\tilde{\alpha}_{\tilde{s}},m}(\tilde{\gamma}) \, \partial_x D^{\tilde{\alpha}_1} u \cdot \ldots \cdot \partial_x D^{\tilde{\alpha}_{\tilde{s}-1 } } u \cdot  \partial_x  f(\tilde{\gamma}) [u,\psi] \\
\label{Comm}
&+ \sum_{\tilde{\gamma} \in \mathcal{B}_{k,m}  } c_k (\tilde{\gamma}) \, \partial_x f(\tilde{\gamma})[u,\psi].
\end{align}
We now discuss according to the three terms in the right hand side of \eqref{Comm}. \par
The first kind of terms in the right hand side of \eqref{Comm} have the desired form: they can written as $f(\gamma)  [u,\psi]$
with $\gamma := (s, \alpha , \beta)  $ 
where $s=\tilde{s}$, $\alpha := ( \tilde{\alpha}_1, \ldots, \tilde{\alpha}_{\tilde{s}} )$
and $\beta = ( 1 , \ldots ,1, m+1) \in (\N^*)^s $  satisfy  $| \alpha |    = k+1 - s $ and  $ |   \beta |    =  m+s $. \par
Therefore, it remains to prove the same for the second and third kind of terms in \eqref{Comm}.
Concerning the third kind of terms in \eqref{Comm}, using Leibniz's rule, we see that for $\tilde{\gamma} = (\tilde{s},\tilde{\alpha},\tilde{\beta}) \in \mathcal{B}_{k,m}$, the term $\partial_x f(\tilde{\gamma})[u,\psi]$ is a combination of $\tilde{s}$ terms of the form $f(\gamma)[u,\psi]$ with $\gamma \in \mathcal{B}_{k,m+1}$, so these terms have also the desired form. The reasoning for the second kind of terms is analogous. \par
\ \par
Consequently we obtain that \eqref{P1fNewk} holds true at order $m+1$ for some coefficients $c_{k,m+1} (\gamma)$. \par
\ \par
\noindent
{\bf b.} Let us now prove the estimate \eqref{EstCkm} on these coefficients $c_{k,m+1}(\gamma)$.
Let us consider $\gamma \in \mathcal{B}_{k,m+1}$. According to the analysis above, a term of the form $f(\gamma) [u,\psi]$ in $F^{k,m+1}[u,\psi]$ may be originated from one of the three terms in the right hand side of \eqref{Comm}. We estimate the contribution to the coefficient $c_{k,m+1}(\gamma)$ of each of these three types of terms. \par
\begin{itemize}
\item[$\bullet$] {\it Terms of the first kind.} 
Let us assume that in the process described above, the term corresponding to $\gamma=(s,\alpha,\beta) \in \mathcal{B}_{k,m+1}$ was created as a term of the form $\partial_x D^{\tilde{\alpha}_1} u \cdot \ldots \cdot \partial_x D^{\tilde{\alpha}_{\tilde{s}-1 } } u \cdot  \partial_x^{m+1}   D^{ \tilde{\alpha}_{\tilde{s}} } \psi $ coming from $F^k [u,  \partial_x^{m} \psi ]$, for some $\tilde{\theta}= (\tilde{s},\tilde{\alpha}) \in {\mathcal A}_{k}$
. \par
Since this term is of the form $f(\gamma)[u,\psi]$ we have that $\tilde{s}=s$, $\tilde{\alpha} =\alpha$ and ${\beta} =(1, \ldots, 1, m+1) $. 
Therefore the contribution of this kind of terms to $c_{k,m+1}(\gamma)$ can be estimated by
\begin{equation} \label{estiNa}
N_{a} :=  \frac{ k!}{{\alpha} ! } \leq \frac{ k! (m+1) ! }{{\alpha} ! {\beta} !} \leq {(2s)}^{2(m-1)} \, \frac{ k! \, (m+1) !}{\alpha! \, \beta! } .
\end{equation}
\item[$\bullet$] {\it Terms of the second kind.} 
Now let us assume that the term corresponding to $\gamma=(s,\alpha,\beta)$ is originated as a second term in the right hand side of \eqref{Comm}, that is, it can be written as a term in
\begin{equation} \label{tare}
\partial_x D^{\tilde{\alpha}_1} u \cdot \ldots \cdot \partial_x D^{\tilde{\alpha}_{ \tilde{s}-1 } } u \cdot \partial_x f(\tilde{\gamma}) [u,\psi] ,
\end{equation}
for some $\tilde{\theta}=(\tilde{s},\tilde{\alpha}) \in {\mathcal A}_{k}$ and $\tilde{\gamma} =(j,\underline{\alpha},\tilde{\beta}) \in {\mathcal B}_{\tilde{\alpha}_{\tilde{s}},m}$. \par
Now we see that since this term is of the form $f(\gamma)[u,\psi]$, the indices $s$, $\alpha$, $\beta$ and $\tilde{s}$, $\tilde{\alpha}$, $\tilde{\beta}$, $\tilde{\gamma}$ are connected as follows:
\begin{itemize}
\item one has  $\tilde{s} - 1 + j =s$,
\item one has $\alpha = (\tilde{\alpha}_{1}, \ldots, \tilde{\alpha}_{\tilde{s} - 1}, \underline{\alpha})$,
\item the multi-index $\beta$ has the form ${\beta} := (1,\ldots, 1,\underline{\beta} )$, for some $\underline{\beta} \in \N^j$, 
\item moreover $\underline{\beta}$ equals to $\tilde{\beta}$ up to a component for which one adds $1$ to $\tilde{\beta}_{i}$. In other words, if one denotes for $r \in \{ 1, \ldots , j \}$, 
\begin{equation*}
{\mathcal R}_{r}(\underline{\beta}) := (\underline{\beta}_{1}, \ldots, \underline{\beta}_{r-1}, \underline{\beta}_{r}-1,\underline{\beta}_{r+1}, \ldots, \underline{\beta}_{j} ),
\end{equation*}
then for some $r \in \{ 1, \ldots , j \}$, one has $\tilde{\beta} = {\mathcal R}_{r}(\underline{\beta})$.
\end{itemize}
Now given $\gamma$ and $j$, there is a unique $(\tilde{\alpha}, \underline{\alpha})$
and at most $j$ different possibilities of $\tilde{\beta}$ such that this process can generate at $f(\gamma)[u,\psi]$ term (according to $r$). Moreover such a term \eqref{tare} comes with a coefficient $c_{k}(\tilde{\theta}) \, c_{\tilde{\alpha}_{\tilde{s}},m}(\tilde{\gamma})$. It follows that the contribution of these terms to the coefficient $c_{k,m+1}(\gamma)$ can be estimated by
\begin{equation*}
N_{b} := \sum_{j=2}^{s} \sum_{r=1}^{j} \, c_k (\tilde{s} , \tilde{\alpha} )\,  c_{\tilde{\alpha}_{\tilde{s}} ,m} (\tilde{\gamma}) .
\end{equation*}
Using the previous iterations, Lemma \ref{P1} and $\tilde{\beta} = {\mathcal R}_{r}(\underline{\beta})$ we deduce
\begin{align*}
N_{b} &\leq \sum_{j=2}^{s}  \sum_{r=1}^{j} (2j)^{2(m-1)}  \frac{k ! }{\alpha_{1}  ! \ldots \alpha_{\tilde{s}-1} ! \, \tilde{\alpha}_{\tilde{s}} !} \frac{\tilde{\alpha}_{\tilde{s}} ! \, m!}{\underline{\alpha} ! \, \tilde{\beta}  ! } \\
&\leq \sum_{j=2}^{s}  \sum_{r=1}^{j} (2j)^{2(m-1)}  \frac{k ! }{\alpha_{1}  ! \ldots \alpha_{\tilde{s}-1} !} \frac{m!}{\underline{\alpha} ! \, \underline{\beta}  ! } \underline{\beta}_{r}.
\end{align*}
With $\alpha! := \alpha_{1}! \ldots \alpha_{\tilde{s}-1}! \, \underline{\alpha}!$ and $\underline{\beta}! = {\beta}!$ we infer
\begin{equation*}
N_{b} \leq \sum_{j=2}^{s} (2j)^{2(m-1)} \frac{k! \, m!}{\alpha! \, \beta!} |\underline{\beta}| .
\end{equation*}
Now using  $|\underline{\beta}| \leq |\beta|=m+s$, we obtain
\begin{equation} \label{estiNb}
N_{b} \leq (m+s) \sum_{j=2}^{s} (2j)^{2(m-1)} \frac{k! \, m!}{\alpha! \, \beta!}
\leq s(m+s) (2s)^{2(m-1)} \frac{k! \, m!}{\alpha! \, \beta! } .
\end{equation}
\item[$\bullet$] {\it Terms of the third kind.} 
Now assume that the term corresponding to $\gamma$ comes from $ \partial_x F^{k,m} [u,\psi ] $. In that case the contribution to the coefficient $c_{k,m+1}(\gamma)$ can be estimated by
\begin{equation*}
N_{c} := \sum_{j=1}^{s} | c_{k,m} ( \tilde{\gamma}_{j} ) | 
\end{equation*}
such terms, where $\tilde{\gamma}_{j} := (s, \alpha, \tilde{\beta}_{j})$ with $\tilde{\beta}_{j} := {\mathcal R}_{j}(\beta)= ({\beta}_{1} ,\ldots , {\beta}_{j-1} , \beta_{j} - 1, {\beta}_{j+1}, \ldots , \beta_{s})$ (assuming $\beta_{j} \geq 1$).
By the induction hypothesis, we get 
\begin{equation} \label{estiNc}
N_{c}  \leq  \sum_{j=1}^{s} (2j)^{2(m-1)} \frac{ k! \, m!}{\alpha! \, \beta! } \beta_{j} \leq  (2s)^{2(m-1)} \frac{k! \, m!}{\alpha! \, \beta!}  |\beta| 
= (2s)^{2(m-1)} \frac{k! \, m!}{\alpha! \, \beta! } (m+s).
\end{equation}
\end{itemize}
Finally, gathering \eqref{estiNa}-\eqref{estiNc} we obtain
\begin{eqnarray*}
 |c_{k,m+1} ( \gamma) | \leqslant  (2s)^{2(m-1)} \frac{ k! m!  }{\alpha !  \beta !} (m + 1 + (s+1) (m +s)  ) 
 \leqslant  (2s)^{2m} \frac{ k! (m+1)!  }{\alpha !  \beta !}  ,
\end{eqnarray*}
which concludes the proof of Lemma \ref{P1k}.
\end{proof}
\ \par
\noindent
{\bf Propagation of inequalities.}
Using the above formal identity, one can prove the following result.
\begin{Lemma} \label{Cocoa}
There exists $\gamma: \R^{+} \rightarrow \R^{+}$ a positive decreasing function satisfying $\displaystyle {\lim_{L \rightarrow + \infty}} \gamma(L)=0$ such that for any $k \in \N$, 
if  for any $j,m \in \N$ such that $0\leq  m \leq 2l -1$ and $0\leq j \leq k -1$ one has
\begin{equation} \label{GDhyp}
 \|  \partial_{x}^{m}  D^{j} u \|_{  L^{\infty} ((-\tau,\tau) \times \T)} \leq  4   \V_{m,j} ,
\end{equation}
with 
\begin{equation} \label{DefV}
{\V}_{m,j} := \frac{m! j! }{(m+1)^{2} (j+1)^{2}} L^{j} {\V}^{j+1}
\ \text{ and } \ 
{\V} := \|u\|_{ L^\infty  ( -\tau,\tau;   W^{2l-1,\infty} (\T))},
\end{equation}
then, for any $m \in \N$ such that $0\leq  m \leq 2l -1$,
\begin{equation} \label{GDcl}
 \|  F^{k,m} [u,u]  \|_{  L^{\infty} ((-\tau,\tau) \times \T)} \leq   \gamma (L)  {\V}_{m,k} .
\end{equation}
\end{Lemma}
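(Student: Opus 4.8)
The plan is to set $\psi=u$ in the formal identity \eqref{P1fNewk} of Lemma \ref{P1k} and estimate $F^{k,m}[u,u]$ term by term, combining the coefficient bound \eqref{EstCkm} with the hypothesis \eqref{GDhyp}; the only structural input needed is a sharp reading of the constraints defining $\mathcal{B}_{k,m}$. First I would bound, for $0\le m\le 2l-1$,
\[
\| F^{k,m}[u,u]\|_{L^\infty((-\tau,\tau)\times\T)}\le\sum_{\gamma=(s,\alpha,\beta)\in\mathcal{B}_{k,m}}|c_{k,m}(\gamma)|\prod_{i=1}^s\|\partial_x^{\beta_i}D^{\alpha_i}u\|_{L^\infty((-\tau,\tau)\times\T)},
\]
by submultiplicativity of the $L^\infty$ norm applied to $f(\gamma)[u,u]$. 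The key observation, which is what makes \eqref{GDhyp} applicable to every single factor, is that the constraints $|\beta|=m+s-1$, $\beta\in(\N^*)^s$ force $\beta_i\le|\beta|-(s-1)=m\le 2l-1$, while $|\alpha|=k+1-s$ and $s\ge2$ force $\alpha_i\le k-1$. Hence each factor obeys $\|\partial_x^{\beta_i}D^{\alpha_i}u\|_{L^\infty}\le 4\,\V_{\beta_i,\alpha_i}$.

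Next I would substitute the definition \eqref{DefV} of $\V_{\beta_i,\alpha_i}$ and the bound \eqref{EstCkm}, and watch the factorials telescope: the factor $1/(\alpha!\,\beta!)$ in $|c_{k,m}(\gamma)|$ cancels exactly the product $\prod_i\beta_i!\,\alpha_i!$ coming from the $\V_{\beta_i,\alpha_i}$; since $|\alpha|+s=k+1$, the powers of $\V$ recombine into $\V^{k+1}$ and those of $L$ into $L^{|\alpha|}=L^{k+1-s}$. Dividing by the target $\V_{m,k}$ of \eqref{DefV} I expect to reach, for each $\gamma$,
\[
\frac{|c_{k,m}(\gamma)|\,\|f(\gamma)[u,u]\|_{L^\infty}}{\V_{m,k}}\le(2s)^{2(m-1)}\,4^s\,(m+1)^2(k+1)^2\,\frac{L^{1-s}}{\prod_{i=1}^s(\beta_i+1)^2(\alpha_i+1)^2}.
\]
The decisive feature is the factor $L^{1-s}$ with $s\ge2$: it is the sole source of smallness as $L\to+\infty$.

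It then remains to sum over $\gamma\in\mathcal{B}_{k,m}$, which factorizes into a sum over $\beta$ and a sum over $\alpha$. For the $\beta$-sum, writing $\beta_i=1+\delta_i$ with $|\delta|=m-1$ turns $\sum_\beta\prod(\beta_i+1)^{-2}$ into a sum of at most $O(s^{m-1})$ terms each $\le 4^{-s}$, so it is bounded by a polynomial in $s$ times $4^{-s}$, which absorbs the factor $4^s$. For the $\alpha$-sum I would use the elementary convolution estimate $\sum_{a+b=N}(a+1)^{-2}(b+1)^{-2}\le C(N+1)^{-2}$, iterated in $s$, to get $\sum_{\alpha\in\N^s,\,|\alpha|=N}\prod(\alpha_i+1)^{-2}\le C^{s-1}(N+1)^{-2}$ with $N=k+1-s$. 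The crucial point is that $C^{s-1}L^{1-s}=(C/L)^{s-1}$ is a clean geometric factor in $s$, with no residual power of $L$.

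The main obstacle, and the reason for some care, is the prefactor $(k+1)^2$ together with the range of $s$ up to $k+1$. After the above bounds the summand takes the form, with $\rho:=C/L$,
\[
(m+1)^2\,(2s)^{2(m-1)}\,\mathrm{poly}(s)\,\rho^{\,s-1}\,\frac{(k+1)^2}{(k+2-s)^2}.
\]
I would split the range at $s\le (k+1)/2$, where $(k+1)^2/(k+2-s)^2\le4$ and the sum is a convergent geometric-type series giving an $O(\rho)=O(1/L)$ contribution, and $s>(k+1)/2$, where the crude bound $(k+1)^2/(k+2-s)^2\le(k+1)^2$ is compensated by $\rho^{\,s-1}\le\rho^{(k-1)/2}$, so that the contribution is at most $\sup_k\mathrm{poly}(k)\,\rho^{(k-1)/2}$, a quantity bounded uniformly in $k$ and tending to $0$ as $\rho\to0$. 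Collecting both ranges yields a bound of the form $\gamma(L)\,\V_{m,k}$ with $\gamma$ decreasing and $\gamma(L)\to0$ as $L\to+\infty$, which is exactly \eqref{GDcl}.
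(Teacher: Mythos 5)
Your proof is correct and follows essentially the same route as the paper: bound each $f(\gamma)[u,u]$ by $4^{s}\prod_{i}\V_{\beta_{i},\alpha_{i}}$ (after checking, as you do, that $\alpha_{i}\leq k-1$ and $\beta_{i}\leq m$ so the hypothesis \eqref{GDhyp} applies), cancel the factorials against the coefficient bound \eqref{EstCkm}, and extract the decisive factor $L^{1-s}$, $s\geq 2$, after factorizing the sum over $\gamma$ into sums over $\alpha$ and $\beta$. The only differences are in the bookkeeping: where you iterate the elementary convolution estimate $\sum_{a+b=N}(a+1)^{-2}(b+1)^{-2}\leq C(N+1)^{-2}$, the paper instead invokes Chemin's bound (Lemma \ref{LemmeCheminSMF}), and your two-range splitting in $s$ makes explicit the uniform-in-$k$ convergence $\gamma(L)\to 0$ that the paper declares ``straightforward to see''.
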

\begin{proof}[Proof of Lemma \ref{Cocoa}]
Here we simply denote $\| \cdot  \|$ for $\| \cdot  \|_{L^{\infty}((-\tau,\tau) \times \T)}$.
First, for any $\gamma := (s,\alpha,\beta) \in  \mathcal{B}_{k,m}$, one has, using \eqref{GDhyp},
\begin{equation} \nonumber
 \|  f(\gamma)  [u,u]  \| \leq   4^{s} \prod_{i=1}^{s} \V_{\beta_{i} , \alpha_{i}} 
=  4^{s}\alpha ! \beta !  \Big( \prod_{i=1}^{s}  \frac{1 }{(\alpha_{i}+1)^{2} (\beta_{i}+1)^{2}} \Big) L^{k+1-s} {\V}^{k+1} .
 \end{equation}
Denoting for $\alpha \in \N^{s}$, 
\begin{equation*}
\Upsilon(s,\alpha):=\prod_{i=1}^s \frac{1}{(1+\alpha_{i})^2},
\end{equation*}
we therefore obtain, using \eqref{EstCkm},
\begin{eqnarray*}
\|  F^{k,m} [u,u]  \| &\leq & \sum_{\gamma    \in \mathcal{B}_{k,m}  } |c_{k,m} (\gamma   )|  \,  \| f(\gamma)  [u,u] \|, \\ 
&\leq &  k! \, m! \, {\V}^{k+1} \sum_{s= 2}^{k+1}\,  {(2s)}^{2(m-1)}   4^{s}  L^{k+1-s} 
 \Big( \sum_{ \alpha \, / \, | \alpha | = k+ 1 -s } \,  \Upsilon(s,\alpha)  \Big)
 \Big(  \sum_{ \beta \, / \, | \beta | = m+ s - 1} \, \Upsilon(s,\beta)   \Big) .
\end{eqnarray*}
We now use \cite[Lemma 7.3.3]{cheminsmf}, which we recall for the reader's convenience.  
\begin{Lemma}[\cite{cheminsmf}] \label{LemmeCheminSMF}
For any couple of positive integers $(s,m)$ we have
\begin{equation*}
\sum_{\substack{{\alpha \in \N^{s}} \\ {|\alpha|=m} }}  \Upsilon(s,\alpha) \leq \frac{20^{s}}{(m+1)^{2}} .
\end{equation*}
\end{Lemma}
\noindent
This yields \eqref{GDcl} with 
%, 
\begin{equation*}
\gamma (L)  := \sup_{k}   \sum_{s= 2}^{k+1}\,  {(2s)}^{2(m-1)}   1600^{s}  L^{1-s} 
 \frac{(k+1)^{2} (m+1)^{2}}{( k+ 2 -s )^{2} ( m + s )^{2}}.
\end{equation*} 
It is straightforward to see that $\gamma(L) \rightarrow 0$ as $L \rightarrow +\infty$.
\end{proof}
In the sequel we will use the same notation $\gamma : \R^{+} \rightarrow \R^{+}$ for some other positive decreasing functions such that  $\displaystyle {\lim_{L \rightarrow + \infty}} \gamma(L)=0$, which may change from line to line. Yet these functions will always be, as above, independent of $k$. \par
In particular with some slight modifications left to the reader we also get:
\begin{Lemma} \label{Cacao}
 There exists $\gamma$ a positive decreasing function, independent of $k$, with $\displaystyle {\lim_{L \rightarrow + \infty}} \gamma(L)=0$ such that for any $k \in \N$, 
if  for any $j,m \in \N$ such that $0\leq  m \leq 2l -1$ and $0\leq j \leq k -1$,
\begin{equation*}
\|  \partial_{x}^{m}  D^{j} u \|_{  L^{\infty}((-\tau,\tau) \times \T) } \leq  4   \V_{m,j} 
\ \text{ and } \ 
\|  \partial_{x}^{m} D^{j}  {\Lambda}_{\pm}   P \|_{  L^{\infty}((-\tau,\tau) \times \T)} \leq L \V  \V_{m,j}
\end{equation*}
then,  for any $m \in \N$ such that $0\leq  m \leq 2l -1$,
\begin{equation*}
 \|  F^{k,m} [u,{\Lambda}_{\pm}   P]  \|_{  L^{\infty}((-\tau,\tau) \times \T) } \leq   \gamma (L)  L \V {\V}_{m,k} .
\end{equation*}
\end{Lemma}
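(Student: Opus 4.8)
The plan is to reproduce the proof of Lemma~\ref{Cocoa} almost verbatim, the only difference being the bookkeeping of which of the two hypotheses controls each factor. The structural point to exploit is that for $\gamma = (s,\alpha,\beta) \in \mathcal{B}_{k,m}$ one has
\[
f(\gamma)[u,{\Lambda}_{\pm} P] = \partial_x^{\beta_1} D^{\alpha_1} u \cdots \partial_x^{\beta_{s-1}} D^{\alpha_{s-1}} u \cdot \partial_x^{\beta_s} D^{\alpha_s} {\Lambda}_{\pm} P ,
\]
so that exactly one factor, the last one, involves ${\Lambda}_{\pm} P$, whereas the remaining $s-1$ factors involve $u$.

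First I would bound a single term. For the indices $i = 1, \ldots, s-1$ I use the first hypothesis, $\|\partial_x^{\beta_i} D^{\alpha_i} u\| \leq 4 \V_{\beta_i,\alpha_i}$, while for the last factor I use the second hypothesis $\|\partial_x^{\beta_s} D^{\alpha_s} {\Lambda}_{\pm} P\| \leq L \V \, \V_{\beta_s,\alpha_s}$. Since the $\Lambda_{\pm} P$ factor carries the prefactor $L\V$ in place of a $4$, this yields
\[
\| f(\gamma)[u,{\Lambda}_{\pm} P] \| \leq L \V \, 4^{s-1} \prod_{i=1}^s \V_{\beta_i,\alpha_i} \leq L \V \, 4^{s} \prod_{i=1}^s \V_{\beta_i,\alpha_i} ,
\]
which is precisely $L\V$ times the bound obtained for $\|f(\gamma)[u,u]\|$ in the proof of Lemma~\ref{Cocoa}. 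The replacement of the prefactor $4^{s-1}$ by $4^{s}$ is harmless and leaves the subsequent combinatorial sum literally unchanged.

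It then remains to sum over $\gamma \in \mathcal{B}_{k,m}$ using the coefficient estimate \eqref{EstCkm} and the definition \eqref{DefV} of $\V_{\beta_i,\alpha_i}$, exactly as before. Since the factor $L\V$ is independent of $\gamma$, it factors out of the entire computation, and the application of Lemma~\ref{LemmeCheminSMF} produces the same function $\gamma$ as in Lemma~\ref{Cocoa}. I therefore obtain
\[
\| F^{k,m}[u,{\Lambda}_{\pm} P] \| \leq L \V \, \gamma(L) \, \V_{m,k} = \gamma(L) \, L \V \, \V_{m,k} ,
\]
which is the claimed inequality. The only point requiring (minor) care, and the place where the two proofs genuinely differ, is the separation of the single ${\Lambda}_{\pm} P$ factor from the $u$ factors when distributing the two hypotheses; no new estimate beyond those already established in the proof of Lemma~\ref{Cocoa} is needed.
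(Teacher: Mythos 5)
Your proposal is correct and is exactly the argument the paper intends: the paper gives no separate proof of Lemma~\ref{Cacao}, stating only that it follows from the proof of Lemma~\ref{Cocoa} ``with some slight modifications left to the reader,'' and your bookkeeping --- applying the hypothesis on $u$ to the first $s-1$ factors of $f(\gamma)[u,\Lambda_{\pm}P]$ and the hypothesis on $\Lambda_{\pm}P$ to the last, so that the prefactor $4^{s}$ becomes $4^{s-1}L\V \leq 4^{s}L\V$ and the constant $L\V$ factors out of the combinatorial sum before applying \eqref{EstCkm} and Lemma~\ref{LemmeCheminSMF} --- is precisely that modification. Nothing further is needed.
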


Let us also provide the following technical lemma relying on Leibniz's rule. 
\begin{Lemma} \label{Leib}
For any $m_1 , m_2$ in $\N$, for any $k \in \N$, 
if  for any $j \in \N$ such that  $0\leq j \leq k $, for $i=1,2$, one has
\begin{equation*}
\| D^{j} f_i \|_{  L^{\infty}((-\tau,\tau) \times \T)} \leq  5 \V_{m_i ,j} ,
\end{equation*}
then 
\begin{equation*}
\Big\| \sum_{j=0}^{k} \binom{k}{j}   (D^{j} f_{1} ) (D^{k-j}  f_{2} ) \Big\|_{ L^{\infty}((-\tau,\tau) \times \T)} 
\leq  400  {\V} {\V}_{m_{1} + m_{2},k} .
\end{equation*}
\end{Lemma}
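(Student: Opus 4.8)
The plan is to reduce this operator inequality to a purely numerical one and then dispatch it with two elementary estimates. First I would apply the triangle inequality together with the submultiplicativity of the $L^\infty((-\tau,\tau)\times\T)$ norm, so that the hypotheses $\|D^j f_i\|\le 5\,\V_{m_i,j}$ give at once
\[
\Big\| \sum_{j=0}^{k} \binom{k}{j} (D^{j} f_{1})(D^{k-j} f_{2}) \Big\|
\le 25 \sum_{j=0}^{k} \binom{k}{j}\, \V_{m_1,j}\,\V_{m_2,k-j}.
\]
It then remains to show that the right-hand side is bounded by $400\,\V\,\V_{m_1+m_2,k}$.

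Next I would substitute the explicit definition of $\V_{m,j}$ from \eqref{DefV}. Writing $\binom{k}{j}=k!/(j!\,(k-j)!)$, the factorials $j!$ and $(k-j)!$ cancel those inside $\V_{m_1,j}$ and $\V_{m_2,k-j}$, and the powers of $L$ and $\V$ combine to $L^k\V^{k+2}$, leaving
\[
\binom{k}{j}\,\V_{m_1,j}\,\V_{m_2,k-j}
= \frac{m_1!\,m_2!\,k!}{(m_1+1)^2(m_2+1)^2(j+1)^2(k-j+1)^2}\, L^k \V^{k+2},
\]
while $\V\,\V_{m_1+m_2,k} = \frac{(m_1+m_2)!\,k!}{(m_1+m_2+1)^2(k+1)^2}\, L^k\V^{k+2}$. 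Dividing out the common factor $k!\,L^k\V^{k+2}$, the claim reduces to the scalar inequality
\[
25\,\frac{m_1!\,m_2!}{(m_1+1)^2(m_2+1)^2} \sum_{j=0}^{k}\frac{1}{(j+1)^2(k-j+1)^2}
\le \frac{400\,(m_1+m_2)!}{(m_1+m_2+1)^2(k+1)^2}.
\]

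I would then separate the dependence on $(m_1,m_2)$ from that on $k$. For the index part I would use $m_1!\,m_2!\le (m_1+m_2)!$ together with $m_1+m_2+1\le (m_1+1)(m_2+1)$ to obtain $\frac{m_1!\,m_2!}{(m_1+1)^2(m_2+1)^2}\le \frac{(m_1+m_2)!}{(m_1+m_2+1)^2}$, which cancels exactly against the corresponding factor on the right. It then suffices to establish the convolution estimate $\sum_{j=0}^{k}\frac{1}{(j+1)^2(k-j+1)^2}\le \frac{16}{(k+1)^2}$. This is the only genuine step: setting $a=j+1$ and $b=k-j+1$ so that $a+b=k+2$, I would use the identity $\frac{1}{a^2b^2}=\frac{1}{(a+b)^2}\big(\tfrac1a+\tfrac1b\big)^2\le \frac{2}{(k+2)^2}\big(\tfrac{1}{a^2}+\tfrac{1}{b^2}\big)$ and sum over $j$, using $\sum_{a\ge 1}a^{-2}=\pi^2/6<2$ to get the bound $8/(k+2)^2\le 16/(k+1)^2$.

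The computation is essentially all bookkeeping, and the step I would be most careful about is tracking the numerical constants so that the final factor stays at or below $400$. Here the slack in the convolution bound leaves ample room: since $25\cdot 8/(k+2)^2 = 200/(k+2)^2\le 400/(k+1)^2$, the estimate closes comfortably, which completes the proof of Lemma~\ref{Leib}.
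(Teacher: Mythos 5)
Your proposal is correct and follows essentially the same route as the paper: both reduce the lemma to the numerical inequality $\sum_{j=0}^{k}\binom{k}{j}\V_{m_1,j}\V_{m_2,k-j}\leq 16\,\V\,\V_{m_1+m_2,k}$ (times the factor $25$ from the hypotheses), using the same factorial estimate $\frac{m_1!\,m_2!}{(m_1+1)^2(m_2+1)^2}\leq\frac{(m_1+m_2)!}{(m_1+m_2+1)^2}$ together with a convolution bound $\sum_{j=0}^{k}\frac{1}{(j+1)^2(k-j+1)^2}\lesssim\frac{1}{(k+1)^2}$. The only divergence is the elementary device used for that last sum --- the paper halves the sum by symmetry and bounds $\frac{1}{(k-j+1)^2}\leq\frac{4}{k^2}$ on the half-range, while you use the identity $\frac{1}{a^2b^2}=\frac{1}{(a+b)^2}\left(\frac{1}{a}+\frac{1}{b}\right)^2$ --- and your variant has the minor advantage of being valid uniformly down to $k=0$, where the paper's displayed factor $\frac{4}{k^2}$ degenerates.
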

\begin{proof}[Proof of Lemma \ref{Leib}]
It follows straightforwardly from  the following inequality:
\begin{eqnarray*}
\sum_{j=0}^{k} \binom{k}{j} {\V}_{m_{1} ,j} {\V}_{m_{2},k-j}
&=&  L^{k} \V^{k+2} k! \, \frac{m_{1}! \, m_{2}!}{(m_{1}+1)^{2} (m_{2}+1)^{2}} \, \sum_{j=0}^{k} \frac{1}{(j+1)^{2}(k-j+1)^{2}} \\
&\leq&  2 L^{k} \V^{k+2} k! \, \frac{(m_{1} + m_{2})!}{(m_{1}+m_{2}+1)^{2} } \, \sum_{j=0}^{\lfloor k/2 \rfloor +1} \frac{1}{(j+1)^{2}(k-j+1)^{2}} \\
&\leq& 2 L^{k} \V^{k+2} k! \, \frac{(m_{1} + m_{2})!}{(m_{1}+m_{2}+1)^{2} } \, \frac{\pi^{2}}{6} \, \frac{4}{k^{2}}  \\
&\leq& 16 {\V}  {\V}_{m_{1} + m_{2} , k} .
\end{eqnarray*}
\end{proof}
\subsection{Core of the proof}
Let us consider $\tau \in (0,T)$. \par
We first deal with smooth solutions $u$ of class $C^{\infty}$. We will explain at the end of this section how to extend the analysis to the solutions tackled in Theorem \ref{start2k}. \par
First we obtain the following a priori estimates.
\begin{Proposition} \label{recuk}
There exists $L>0$ such that the following holds true. Let $(u,P)$ be some smooth functions satisfying the equation \eqref{CHk} on $(-\tau,\tau)$. 
Then for any $k \in \N$, for any $j \in \N$ such that $0\leq m\leq 2l -1$, 
\begin{eqnarray} \label{induk}
 \|  \partial_{x}^{m} D^{k}  {\Lambda}_{\pm}   P \|_{L^{\infty}((-\tau,\tau) \times \T)} \leq L \V  \V_{m,k}, \\
\label{induTk}
 \|  \partial_{x}^{m}  D^{k} u \|_{L^{\infty} ((-\tau,\tau) \times \T)} \leq 4 \V_{m,k} .
\end{eqnarray}
\end{Proposition}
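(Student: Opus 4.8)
The plan is to prove the two estimates \eqref{induk} and \eqref{induTk} simultaneously by induction on $k$, since they are intertwined: the bound on $D^k \Lambda_{\pm} P$ at level $k$ feeds into the bound on $D^k u$ at level $k$ (via the equation $Du = -\partial_x P$), and conversely the bound on $u$ at all levels $j \leq k-1$ is what the commutator estimates (Lemmas \ref{Cocoa}, \ref{Cacao}) require as hypotheses. The key structural fact is that $L$ will be chosen large and then \emph{frozen}; the whole point of the smallness factors $\gamma(L)$ in Lemmas \ref{Cocoa} and \ref{Cacao} is that, once $L$ is large enough, every error term produced by commutators is absorbed into a fixed fraction of the target $\V_{m,k}$, so the induction closes with the same constants $4$ and $L$ at each step.

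The key steps, in order, are as follows. First I would treat the base case $k=0$: the estimate \eqref{induTk} for $k=0$ is just $\|\partial_x^m u\| \leq 4\V_{m,0}$, which holds by the definition of $\V$ and $\V_{m,0} = \frac{m!}{(m+1)^2}\V$ (absorbing the combinatorial factor, possibly after enlarging the implicit constant), and \eqref{induk} for $k=0$ follows from the elliptic regularity of Lemma \ref{LemElliptic2l-1} applied to $\partial_x P$ together with the quadratic structure \eqref{CHkF} of $\mathcal{F}[u]$ (estimating $\Lambda_\pm P$ via $AP = \Lambda_\pm \tilde\Lambda_\pm P = \mathcal{F}[u]$ and the bicontinuity of $\tilde\Lambda_\pm$). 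For the inductive step, assuming both estimates hold for all $j \leq k-1$, I would first establish \eqref{induk} at level $k$: apply $\partial_x^m D^k$ to the elliptic relation defining $P$, commute $D^k$ past the spatial derivatives using Lemma \ref{P1k} to isolate $D^k \mathcal{F}[u]$, estimate $D^k \mathcal{F}[u]$ through the product rule (Lemma \ref{Leib}) since $\mathcal{F}[u]$ is a quadratic differential polynomial, and control the commutator remainder $F^{k,m}[\cdot,\cdot]$ by Lemma \ref{Cocoa}; the factor $\gamma(L)$ guarantees this is $\leq$ a small multiple of $L\V\,\V_{m,k}$. Then I would derive \eqref{induTk} at level $k$ by applying $\partial_x^m D^{k}$ to $Du = -\partial_x P$: write $D^{k}(\partial_x^m u)$ in terms of $\partial_x^m D^{k} u$ plus commutators (Lemma \ref{P1k}), and reduce $D^{k}\partial_x P = \partial_x D^{k}P + \text{commutator}$; the principal term $\partial_x D^{k-1}(\Lambda_\pm P)$-type contribution is controlled by the \eqref{induk} estimate just obtained, while the material-derivative structure $D = \partial_t + u\partial_x$ lets me integrate $D^k u$ along the flow, converting the $D^{k-1}$ bound into a $D^k$ bound with the gain in the factor $\V_{m,k}/\V_{m,k-1} \sim L\V/k$ that exactly matches the Cauchy-product combinatorics.

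The main obstacle I expect is the bookkeeping in the inductive step for \eqref{induTk}: one must verify that all the numerical constants ($4$, $L$, the $400$ and $1600$ appearing in Lemmas \ref{Leib} and \ref{Cocoa}, and the $\pi^2/6$-type sums) genuinely close the induction rather than merely \emph{almost} closing it. Concretely, the delicate point is that the estimate for $D^k u$ must come out with constant exactly $4$ (not $4 + \text{something}$): this forces a careful split where the ``main'' contribution from $\partial_x P$ is bounded by $\frac{1}{2}\cdot 4\V_{m,k}$ using the $L\V\,\V_{m,k}$ bound on $\Lambda_\pm P$ divided by the large factor $L$, and the commutator contributions are each bounded by $\gamma(L)\V_{m,k}$ and summed to something $< 2\V_{m,k}$ for $L$ large. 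The $L$ must be chosen \emph{once}, independently of $k$, which is exactly why Lemmas \ref{Cocoa}--\ref{Leib} are phrased with $k$-independent functions $\gamma(L)$; the verification that these $k$-independent choices suffice uniformly in $k$ is the crux, and it relies essentially on the sharp weight $\V_{m,j} = \frac{m!j!}{(m+1)^2(j+1)^2}L^j\V^{j+1}$, whose factorial-and-square structure is precisely tuned so that the convolution sums in Lemmas \ref{Cocoa} and \ref{Leib} converge with $L$-decaying prefactors.
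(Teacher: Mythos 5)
Your overall skeleton (simultaneous induction on $k$, base case via the ellipticity Lemma \ref{LemElliptic2l-1}, commutator control by Lemmas \ref{P1k}, \ref{Cocoa}, \ref{Cacao}, the Leibniz Lemma \ref{Leib}, and one choice of $L$, made once and independent of $k$, absorbing the $\gamma(L)$ factors) is the same as the paper's. But the order you impose on the two estimates inside the inductive step is backwards, and as written this makes the argument circular. You propose to prove \eqref{induk} at rank $k$ first, from the hypotheses at ranks $\leq k-1$, by estimating $D^k\mathcal{F}[u]$ via Lemma \ref{Leib}. That application is not justified at that stage: the hypotheses of Lemma \ref{Leib} require $\|D^j \partial_x^{m_i} u\| \leq 5\V_{m_i,j}$ for \emph{all} $0\leq j\leq k$, and the terms with $j=k$ (all $k$ material derivatives falling on a single factor of the quadratic polynomial \eqref{CHkF}) are, modulo the commutator $F^{k,m_i}[u,u]$ handled by Lemma \ref{Cocoa}, exactly the estimate \eqref{induTk} at rank $k$ --- which in your ordering has not yet been proved. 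You then propose to obtain \eqref{induTk} at rank $k$ from ``the \eqref{induk} estimate just obtained'', i.e.\ from rank $k$ of \eqref{induk}; so each rank-$k$ estimate rests on the other.

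The circle is broken by the observation that your opening paragraph states backwards: by \eqref{CHk} and \eqref{secon}, $-2i\,D^k u = -D^{k-1}(\Lambda_+ + \Lambda_-)P$, so the bound on $D^k u$ at level $k$ needs only \eqref{induk} at level $k-1$ (one power of $D$ is consumed by the equation $Du=-\partial_x P$), not at level $k$. This is precisely the paper's ordering: \eqref{induTk} at rank $k$ is deduced \emph{first}, directly from the induction hypothesis, and only then is \eqref{induk} proved at rank $k$, by estimating $D^k\mathcal{F}[u]$ (Lemmas \ref{Cocoa} and \ref{Leib}, now legitimately applicable), writing $\tilde{\Lambda}_{\pm} D^k \Lambda_{\pm} P = D^k\mathcal{F}[u] + \sum_m d_m F^{k,m}[u,\Lambda_{\pm}P]$ with Lemma \ref{Cacao} for the second term, inverting $\tilde{\Lambda}_{\pm}$ by Lemma \ref{LemElliptic2l-1}, and absorbing the resulting constant by taking $L$ large. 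Two further inaccuracies in your description of the $u$-estimate: since in \eqref{induTk} the operator $\partial_x^m$ sits \emph{outside} $D^k$, this step requires no commutation at all (your appeal to Lemma \ref{P1k} and to ``integrating along the flow'' there is superfluous, and the latter would also require control of $D^k P$ itself, which the induction never provides --- only $D^k\Lambda_{\pm}P$ is controlled); and the constant $4$ does not come from a $\tfrac12\cdot 4$ plus commutator split, but simply from $L\V\,\V_{m,k-1} = \frac{(k+1)^2}{k^3}\,\V_{m,k} \leq 4\,\V_{m,k}$, the supremum being attained at $k=1$ (note also that the gain $\V_{m,k}/\V_{m,k-1}$ is $\sim k L\V$, not $\sim L\V/k$ as you write).
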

Let us recall here that the operators ${\Lambda}_{\pm} $ are defined in \eqref{deflambda} and $\V$, $\V_{m,k}$ are defined in \eqref{DefV}.
\begin{proof}[Proof of Proposition \ref{recuk}]
We establish \eqref{induk}-\eqref{induTk} by induction on $k$. \par
\ \par
\noindent
{\bf 1.} For $k=0$, \eqref{induTk} is straightforward. To get \eqref{induk}, we observe that
\begin{equation*}
- \Lambda_{+} P = \tilde{\Lambda}_{+}^{-1} \mathcal{F} [u]
\ \text{ and } \
- \Lambda_{-} P =  \tilde{\Lambda}_{-}^{-1}  \mathcal{F} [u].
\end{equation*}
Now it follows from \eqref{CHkF} and Lemma \ref{LemElliptic2l-1} that \eqref{induk} at rank $0$ holds true for $L$ large enough. \par
\ \par
\noindent
{\bf 2.} Let us now consider $k >0$. We assume that \eqref{induk} and \eqref{induTk} hold true up to the order $k-1$, and aim at proving them at rank $k$, for $L$ large enough independent of $k$.  \par
\ \par
\noindent
{\bf a.} First from \eqref{CHk} and \eqref{secon} we infer that 
\begin{equation*}
-2i D^k u = - D^{k-1} ( \Lambda_{+} + \Lambda_{-} ) P .
\end{equation*}
Together with the induction hypothesis, we deduce \eqref{induTk} holds true at rank $k$.
It remains to prove that \eqref{induk} holds true at rank $k$ as well. \par
\ \par
\noindent
{\bf b.} We begin by estimating $D^k  \mathcal{F} [u] $ in  $L^\infty (( -\tau,\tau) \times \T)$.
Applying $D^k$ to  \eqref{CHkF} and using Leibniz's rule, we get 
\begin{equation*}
D^k  \mathcal{F} [u] =  \sum_{0 \leq m_{1} + m_{2} \leq 2l - 1 } \mathfrak{c}_{m_{1}  , m_{2} } 
 \sum_{j=0}^{k} \binom{k}{j} ( D^{j} \partial_x^{m_{1}} u  ) (D^{k-j}  \partial_x^{m_{2}} u ) .
\end{equation*}
Thanks to Lemma \ref{P1k}, we deduce that
\begin{equation*}
D^{j} \partial_x^{m_{1}} u = \partial_x^{m_{1}} D^{j} u  - F^{j,m_{1}} [u, u] 
\ \text{ and } \ 
D^{k-j}  \partial_x^{m_{2}} u =  \partial_x^{m_{2}} D^{k-j}   u - F^{k-j,m_{2}} [u, u] .
\end{equation*}
Then using Lemma \ref{Cocoa} and the fact that \eqref{induTk} is true up to rank $k$, we deduce that for $L$ large enough, 
\begin{equation*}
\|   D^{j} \partial_x^{m_{1}} u \|_{  L^{\infty} ( (-\tau,\tau) \times \T)} \leq   5{\V}_{m_{1},j} 
\ \text{ and } \ 
\| D^{k-j}  \partial_x^{m_{2}} u   \|_{  L^{\infty} ( (-\tau,\tau) \times \T)} \leq  5{\V}_{m_{2},k-j}   .
\end{equation*}
Now using Lemma \ref{Leib} we deduce 
\begin{equation*}
\Big\|  \sum_{j=0}^{k} \binom{k}{j} ( D^{j} \partial_x^{m_{1}} u  ) (D^{k-j}  \partial_x^{m_{2}} u ) \Big\|_{  L^{\infty} ( (-\tau,\tau) \times \T)} 
\leq  400  {\V} {\V}_{m_{1} + m_{2},k} .
\end{equation*}
Now we observe that $l$ being fixed, there exists some constants $C_{1} , C_{2} > 0$ such that for any $m,k \in \N$ with $0 \leq m \leq 2l-1$, 
\begin{equation*}
C_{1} {\V}_{2l-1,k}  \leq {\V}_{m,k} \leq C_{2} {\V}_{2l-1,k}.
\end{equation*}
It follows that
\begin{equation} \label{indu3}
\|  D^{k}   \mathcal{F} [u]  \|_{L^\infty ( (-\tau,\tau) \times \T)} \leq  C  {\V} {\V}_{2l-1,k} .
\end{equation}
\ \par
\noindent
{\bf c.} Next we estimate $\tilde{\Lambda}_{\pm} D^k {\Lambda}_{\pm} P $  in $L^\infty ((-\tau,\tau) \times \T)$. Applying $D^k$ to the second equation in   \eqref{CHk} and using the relations in \eqref{secon}, we dget
\begin{equation*}
D^{k} A P =  D^{k} \tilde{\Lambda}_{\pm} \Lambda_{\pm} P = D^{k} {\mathcal F}[u].
\end{equation*}
Now, with the identity \eqref{combibi} and Lemma \ref{P1k} we deduce 
\begin{equation} \label{Step4a}
 \tilde{\Lambda}_{\pm} D^k  {\Lambda}_{\pm} P =   D^k   \mathcal{F} [u] + 
\sum_{0 \leq  m \leq 2l-1  } d_{m} F^{k,m} [u, {\Lambda}_{\pm} P] .
\end{equation}
The first term in  the right hand side of \eqref{Step4a} was estimated in \eqref{indu3}.
To estimate the second one we use Lemma \ref{Cacao} and modify again $\gamma$ to get 
%
%, 
\begin{equation} \nonumber
\|  F^{k,m}  [u, {\Lambda}_{\pm} P ] \|_{L^\infty ((-\tau,\tau) \times \T)} \leq \gamma(L) L  {\V}  {\V}_{m,k}     .
\end{equation}
Thus, modifying again $\gamma(L)$, we get
\begin{equation*}
\| \tilde{\Lambda}_{\pm} D^k  {\Lambda}_{\pm} P  \|_{L^\infty ((-\tau,\tau) \times \T)} \leq \gamma (L) L  {\V}  {\V}_{2l-1,k} .
\end{equation*}
\ \par
\noindent
{\bf d.} Finally we use Lemma \ref{LemElliptic2l-1}, and deduce that
\begin{equation*}
\| D^k  {\Lambda}_{\pm} P  \|_{L^\infty (-\tau,\tau; W^{2l-1}(\T))} \leq C \gamma (L) L  {\V}  {\V}_{2l-1,k} .
\end{equation*}

We choose $L$ large enough to absorb the constant $C$ (independently of $k$) and deduce that \eqref{induk} holds up to order $k$. 
The proof of Proposition \ref{recuk} is over.
\end{proof}
\ \par
\noindent
\begin{proof}[End of the proof of Theorem \ref{start2}]
We still suppose in a first time that $u$ is smooth. Now from the definition of $\xi$ we infer that 
\begin{equation*}
\partial^{k +1 }_t \xi (t,x)= D^k u (t, \xi (t,x)).
\end{equation*}
As mentioned above, the proof of Theorem \ref{start1k} provides that the flow
$\xi$ is $C^{1}$ over $( -\tau,\tau) $ with values in $ W^{2l-1,\infty}( \T) $. 
Thus we infer the following estimates for the flow map: there exists $L>0$ such that for any $k \in \N$,
\begin{equation} \label{indun}
\|  \partial^{k +1 }_t \xi   \|_{L^\infty ( -\tau,\tau; W^{2l-1,\infty}(\T) )} 
\leq \frac{k! L^{k}}{(k+1)^{2}} \| u  \|^{k+1}_{L^\infty ( -\tau,\tau; W^{2l-1,\infty}(\T))}.
%\| \xi \|_{L^\infty (-\tau,\tau; W^{2l-1,\infty}(\T))} . ???
\end{equation}
(One can again choose $L$ large enough to absorb the constants depending on $m$.)
In particular this yields the analyticity in time of $\xi$ in the case where $u$ is a $C^{\infty}$ solution. \par
Let us finally briefly explain how to deduce the result for the general flows considered in Theorem \ref{start2k}.
Let $u_0$ be in $W^{2l-1,\infty} (\T)$. By convolution there exists a sequence of  smooth functions $(u_0^{n} )_{n \in \N}$ which satisfies $ \| u_0^{n}   \|_{W^{2l-1,\infty} ( \T) } \leq  \| u_0 \|_{W^{2l-1,\infty} ( \T) }$
and converges to $u_0$ in $H^{2l-1} (\T)$. \par
To these initial data $u_{0}^{n}$ one associates the corresponding smooth flows $(\xi_n )_{n \in \N}$ for which the previous analysis can be applied. In particular the estimates \eqref{indun} hold true when one substitutes $\xi_n $ instead of $\xi$ in the right hand side. But due to the convergence of $u^{n}_{0}$ to $u_{0}$ in $H^{2l-1}(\T)$, we deduce with \eqref{DCk} that the corresponding solutions $u^{n}$ converge to the limit solution $u$ in $C([-\tau,\tau], W^{2l-1,\infty}(\T))$. It follows that the flows $(\xi_n )_{n \in \N}$ converge uniformly to $\xi$ (locally in time). Hence passing to the limit when $n \rightarrow + \infty$ we obtain that $\xi$ satisfies the estimates \eqref{indun} as well, and consequently belongs to the space $C^{\omega } (  (-T,T), W^{2l-1,\infty}(\T))$ of the analytic functions from $(-T,T)$ to $W^{2l-1,\infty} ( \T)$. This concludes the proof of Theorem \ref{start2}. 
\end{proof} 
\section{A concluding remark}
\label{picon}
The Camassa-Holm equation possesses solutions of a soliton type, which, because of their shape (at the peak, the derivative is discontinuous), have been given the name of peakons.
When the equation is set on the whole line, a single peakon is given by
\begin{equation} \label{peakon}
 u_c (t,x) = c e^{-|x-ct|} ,
\end{equation}
with $c >0$, whereas it is called an antipeakon when $c<0$.
The traveling speed is then equal to the height of the peak. By taking a linear combination of peakons one obtains what is called a multipeakon solution. 
\begin{equation} \label{multipeakon}
 u (t,x) =  \sum_{i=1}^{n} p_{i} (t) e^{-|x- q_{i} (t)  | }.
\end{equation}
Then the positions $q(t) := (q_{i }(t) )_{1 \leqslant i \leqslant n}$ and the amplitudes $p(t) := (p_{i} (t) )_{1 \leqslant i \leqslant n}$ of the crests are determined in a non-linear fashion by the interaction.
Actually  plugging the ansatz \eqref{multipeakon} into Eq. \eqref{CH}  we obtain 
the following  system of ordinary differential equations:
\begin{equation} \label{hamilton}
\dot{q}_{i} = \sum_{m=1}^{n} p_{j } e^{-|q_{i } - q_{j } | } , \quad \dot{p}_{i} = \frac{1}{2} \sum_{j=1}^{n} p_{i  } p_{j } \text{sign} (q_{i } - q_{j } ) e^{-|q_{i } - q_{j } | } .
\end{equation}
When $u$ is given by the ansatz \eqref{multipeakon}, then $m$, defined in \eqref{m}, is equal to the following combination of Dirac masses:
\begin{equation}  \label{mmultipeakon}
 m =  \sum_{i=1}^{n} 2 p_{i} (t) \delta_{ q_{i} (t)  }
\end{equation}
and therefore the energy defined in \eqref{pe} is given by
\begin{equation} \label{pem}
\|  u\|^{2}_{H^1 (\R) } = 2 \sum_{1  \leq i,j \leq n} p_{i} (t) p_{j} (t) e^{-|q_{i } (t) - q_{j }(t)  | }  .
\end{equation}
Since the energy is time-independant we obtain that the  system  \eqref{hamilton} reads
\begin{equation}  \label{hamilton2}
\dot{q}_{i} = \partial_{p_{i} } \mathcal{H} , \quad \dot{p}_{i} =  -  \partial_{q_{i} } \mathcal{H} ,
\end{equation}
with Hamiltonian
\begin{equation} \label{hamilton3}
 \mathcal{H} (p,q) :=  \frac{1}{2} \sum_{1  \leq i,j \leq n} p_{i} (t) p_{j} (t) e^{-|q_{i }(t) - q_{j }(t) | }   ,
\end{equation}
It is straightforward that solutions of  \eqref{hamilton} are analytic with respect to time up to the first collision. \par
Higher peakons move faster than the smaller ones, and when a higher peakon overtakes a smaller, there is an exchange of mass, but no wave breaking takes place, in the sense that the solution remains Lipschitz. Furthermore, the $q_{i} (t)$ remain distinct.  However, if some of the $p_{i} (0)$ have some opposite signs, for instance when one peakon and one antipeakon meet, wave breaking may occur. \par
The problem of continuation beyond wave breaking was recently considered by Bressan and Constantin \cite{BC1} improving earlier results on the  $H^1$ theory (see the introduction of  \cite{BC1} for more about the history of this theory). They reformulated the Camassa-Holm equation as a system of ordinary differential equations taking values in a Banach space. This formulation allowed them to continue the solution beyond the collision time, giving a global conservative solution for which  the energy is conserved for almost all times. 
 Holden and Reynaud proceed in the same way but with a different set of variables which simply corresponds to the transformation between Eulerian and Lagrangian coordinates see \cite{HR1,HR2}.
 In the context of peakon-antipeakon collisions they considered the solution where the peakons and antipeakons ``passed through each other''. Local existence of the ODE system is obtained by a contraction argument. Furthermore, their clever reformulation allows for a global solution where all singularities disappear. Going back to the original function $u$, one obtains a global solution of the Camassa-Holm equation.  In addition to the solution u, one includes a family of non-negative Radon measures 
$\mu_{t}$ with density $(u+u^{2}_{x}) dx$ with respect to the Lebesgue measure. The pair $(u, \mu_t)$ is associated to a continuous semigroup and in particular, one has uniqueness and stability.
 Very recently, Bressan and Fonte \cite{BF}  constructed a semi-group of conservative solutions for the Camassa-Holm equation which is continuous with respect to a distance $J$, defined in term of a optimal transportation problem, which is intermediate between $H^1$ and $L^{1}$. The semi-group is actually constructed as the uniform limit of multi-peakon solutions. \par
The investigation of the explicit example of the collision of a peakon and of an antisymmetric antipeakon reveals that the analyticity of the trajectories of the crests still holds even through the collision if one selects the conservative scenario after the collision with an appropriate labelling. Indeed if one look at the collision at $(t,x) = (0,0)$ of a peakon   $p_{1} (t) e^{-|x- q_{1} (t)  | }$ with $p_{1} (t) ,q'_{1} (t)  \rightarrow c_{1} $ when $t \rightarrow -\infty$ and of an antipeakon  $p_{2} (t) e^{-|x- q_{2} (t)  | }$ with $p_{2} (t) ,q'_{2} (t)  \rightarrow c_{2} $ when $t \rightarrow -\infty$,   for some $c_{1} > 0$, $c_{2} < 0$, then one has to glue together the trajectories
\begin{equation*}
q_{1} (t) =  \ln \left( \frac{c_{1} - c_{2}}{  c_{1} e^{-c_{1} t} - c_{2}e^{-c_{2} t} } \right), \ \ 
q_{2 } (t) = - \ln \left( \frac{c_{1} - c_{2}}{  c_{1} e^{c_{1} t} - c_{2}e^{c_{2} t} } \right) ,
\end{equation*}
for $t<0$ with some trajectories of the form 
\begin{equation*}
\tilde{q}_{1} (t) = \ln \left( \frac{\tilde{c}_{1} - \tilde{c}_{2}}{  \tilde{c}_{1} e^{-\tilde{c}_{1} t} - \tilde{c}_{2}e^{-\tilde{c}_{2} t} } \right) , \ \
\tilde{q}_{2} (t) = - \ln \left(\frac{\tilde{c}_{1} - \tilde{c}_{2}}{  \tilde{c}_{1} e^{\tilde{c}_{1} t} - \tilde{c}_{2}e^{\tilde{c}_{2} t} } \right) ,
\end{equation*}
for $t>0$. 
If one looks for the solutions which verify that  $ \int_{ \R} u(t,y)  dy $ and $\|  u(t,\cdot)  \|_{H^1 (\R) }$ are time independent, then one must have $\{ \tilde{c}_{1} ,  \tilde{c}_{2}\} = \{ {c}_{1} ,  {c}_{2}\}$, that is, with the arbitrary choice that $ \tilde{c}_{1} < \tilde{c}_{2}$,  $ \tilde{c}_{1} = c_{1}$, $ \tilde{c}_{2} = {c}_{2} $. 
It then remains to choose if one prolongs $(q_{1} (t), q_{2} (t))_{t<0}$ by $(\tilde{q}_{1} (t), \tilde{q}_{2} (t))_{t>0}$ or $(\tilde{q}_{2} (t), \tilde{q}_{1} (t) )_{t>0}$. 
Yet since $\lim_{t \rightarrow 0^{-}} q''_{1} (t) = c_{1} c_{2}$ and $\lim_{t \rightarrow 0^{+}} \tilde{q}''_{2} (t) = - c_{1} c_{2}$, the only choice which yields analytic trajectories for $t$ running over the full line is the first one. \par
It should be therefore interesting to determine whether Theorem \ref{start2} could be extended to the  $H^1$  conservative theory or not.
Let us stress here that Theorem \ref{start1} contains the case of the periodic counterpart of the peakon given in \eqref{peakon} : the so-called periodic peakons which are of the form 
\begin{equation} \label{ppeakon}
 u_\gamma (t,x) =  \gamma  \sum_{n \in \Z } e^{-|x+n -ct  | } ,
\end{equation}
where $\gamma $ is chosen in such a way that the height of the crest of the peakon  is the same as the speed of the crest, that is such that $u_\gamma (t,ct) = c$ (cf. \cite[Section 2.1]{suisses}). \par
\section*{Appendix. Proof of Theorem \ref{start1k}}
In this appendix we give a sketch of proof of Theorem \ref{start1k}. We will follow closely the method of  \cite{suisses}.
\begin{Definition}[] \label{}
Let us define $ \mathcal{D}_l$ the Banach manifold given by the union of the following two charts:
\begin{eqnarray*}
 \mathcal{U}_0 := \{ \xi (x) = x + f(x) / \ f \in W^{2l-1,\infty} ( \T) : \ |  f(0)| < 1/2 , \ \text{ essinf  } f'  > -1  \} , \\ 
 \mathcal{U}_l := \{ \xi (x) = x + f(x) / \ f \in W^{2l-1,\infty} ( \T) : \ 0 <  f(0) < 1 , \ \text{ essinf  } f'  > -1   \}.
\end{eqnarray*}
\end{Definition}
Let us introduce here a few notations.
We denote by $R_\xi  v$ the right translation $v \circ \xi$ of $v$ by $ \xi$ and by $P_\xi$ the conjugate  $R_\xi  P  R_{\xi^{-1}} $ of a nonlinear operator $P$. 
We also define the directional derivative of $P_\xi$ in direction $w$ as follows: let $t \mapsto \xi_t$ be a $C^1$-path with values in $ \mathcal{D}_l$  such that $ \xi_0 = \xi$ and $(\frac{d}{dt}  \xi_t )|_{t=0} = w$. Then we define the directional derivative
\begin{equation*}
D_w P_\xi (f) := \frac{d}{dt} |_{t=0}  (P_{\xi_t } f) .
\end{equation*}
We introduce 
\begin{equation*}
f(\xi, v) := - \Big( A^{-1} \partial_x \mathcal{F}_l \Big)_\xi [v]
= - ( A^{-1} \partial_x  )_{\xi}  h , \ \text{ with } \ h := ( \mathcal{F} )_{\xi}[v] .
\end{equation*}
It is sufficient to prove that $f$ is $C^1$ in a neighborhood of $(Id,0)$ in $ \mathcal{D}_l \times W^{2l-1,\infty} ( \T)$ (see \cite[Theorem 2]{suisses}). Then by scaling properties we can infer that the ODE Cauchy problem
\begin{equation*}
(\xi, v)' (t)  = (v, f(\xi, v) ), \ \ (\xi , v) (0) = (Id , u_0 ) ,
\end{equation*}
admits a local-in-time solution in $ \mathcal{D}_l \times W^{2l-1,\infty} ( \T)$ for any $u_0 \in W^{2l-1,\infty} ( \T)$. \par
Since 
\begin{equation*}
A^{-1} \partial_x =2i (  \tilde{\Lambda}_{+} +  \tilde{\Lambda}_{l,-}),
\end{equation*}
this follows from the two next propositions.
\begin{Proposition}
Let $\xi_{0} \in \C \setminus 2 \pi \Z$   and $j \in \N$. Then the mapping
\begin{eqnarray*}
\xi \in  \mathcal{D}_l \mapsto ( -i \partial_x -  \xi_0)_\xi^{-1} \in  \mathcal{L} (W^{j-1,\infty} (\T) ,W^{j,\infty} (\T) )
\end{eqnarray*}
is of class $C^1$.
\end{Proposition}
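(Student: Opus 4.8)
The plan is to reduce the statement to a concrete formula for the conjugated operator and then to differentiate explicitly in $\xi$. Recall from the proof of Lemma \ref{LemElliptic1} that the inverse of $-i\partial_x - \xi_0$ on $\T$ is given by the integral formula \eqref{varcsol} together with the periodicity constant $C = C[g]$. Conjugating by the diffeomorphism $\xi$, the operator $(-i\partial_x - \xi_0)_\xi^{-1} = R_\xi \circ (-i\partial_x-\xi_0)^{-1} \circ R_{\xi^{-1}}$ acts on $g \in W^{j-1,\infty}(\T)$ by first composing with $\xi^{-1}$, then applying the explicit kernel, then composing back with $\xi$. The key observation is that composition with $\xi^{-1}$ followed by the integral operator and recomposition with $\xi$ produces a kernel that can be written, after the change of variables $y = \xi^{-1}(x')$, as an integral in the \emph{original} variable against an explicit factor $e^{i\xi_0(\xi(x)-\xi(y))}$ times the Jacobian $\xi'(y)$. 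This turns the problem into studying the smoothness of a few elementary building blocks.

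First I would write the conjugated inverse in closed form: for $h \in W^{j-1,\infty}(\T)$,
\begin{equation*}
\big((-i\partial_x-\xi_0)_\xi^{-1} h\big)(x) = i\, e^{i\xi_0 \xi(x)} \Big( C_\xi[h] + \int_0^x h(x')\, e^{-i\xi_0 \xi(x')}\, dx' \Big),
\end{equation*}
where $C_\xi[h]$ is the periodicity constant adjusted to $\xi$, determined (as in Lemma \ref{LemElliptic1}) by the requirement that the output be $1$-periodic; since $\xi(x+1) = \xi(x)+1$ on $\mathcal{D}_l$, the factor $e^{i\xi_0\xi(x)}$ picks up exactly $e^{i\xi_0}$ over a period, so the solvability condition $\xi_0 \notin 2\pi\Z$ is preserved and $C_\xi[h]$ is given by a fraction whose denominator is the nonvanishing constant $1 - e^{i\xi_0}$. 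I would then regard this formula as a map $(\xi,h) \mapsto (-i\partial_x-\xi_0)_\xi^{-1}h$ and show it is $C^1$ jointly, which in particular gives $C^1$ dependence of the operator on $\xi$ in the operator-norm topology $\mathcal{L}(W^{j-1,\infty},W^{j,\infty})$.

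Next I would decompose the expression into the basic operations whose differentiability in $\xi$ is clear: the composition map $\xi \mapsto g\circ\xi$, the multiplication by $e^{\pm i\xi_0\xi}$, integration against $e^{-i\xi_0\xi(x')}$, and the linear functional $C_\xi$. The smoothness of the substitution maps is standard for the Banach manifold $\mathcal{D}_l$, since $W^{2l-1,\infty}(\T)$ is a Banach algebra (here one uses that $2l-1 \geq 1$, so that composition and multiplication are well-behaved and the chain rule applies). The exponentials $e^{\pm i\xi_0\xi}$ are smooth functions of $\xi \in W^{2l-1,\infty}$ with values in $W^{2l-1,\infty}$ because the algebra is closed under the analytic map $z \mapsto e^{\pm i\xi_0 z}$ on a neighborhood. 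Differentiating the integral formula term by term via the product and chain rules then yields an explicit expression for $D_w\big((-i\partial_x-\xi_0)_\xi^{-1}\big)$ that is manifestly continuous in $\xi$, which establishes the $C^1$ claim. The gain of one derivative, from $W^{j-1,\infty}$ to $W^{j,\infty}$, is built into the formula exactly as in Lemma \ref{LemElliptic1}: the output carries one primitive of the data.

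The main obstacle I anticipate is the bookkeeping around the periodicity constant $C_\xi[h]$ and the verification that all the estimates survive differentiation in $\xi$ with the correct loss/gain of regularity. In particular one must check that $D_w C_\xi[h]$ is a bounded functional of $h$ with values behaving compatibly with the target space, and that the composition $R_{\xi^{-1}}$, which involves the inverse diffeomorphism, depends smoothly on $\xi$; the latter is the standard but slightly delicate fact that inversion is smooth on the diffeomorphism group modelled on $W^{2l-1,\infty}$. Once these ingredients are assembled, the conclusion is a routine application of the product and chain rules to the closed-form expression, so no genuinely new difficulty beyond the regularity calculus on $\mathcal{D}_l$ arises.
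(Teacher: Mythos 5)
Your overall strategy is, in substance, the paper's own: the paper gives no argument beyond the sentence that the proposition ``can be proved in the same way as \cite[Proposition 3]{suisses}'', and that proposition of De~Lellis--Kappeler--Topalov is established in exactly the explicit-representation spirit you propose --- write the conjugated operator in a closed form in which only $\xi$ (never $\xi^{-1}$) appears, and differentiate the resulting elementary expression in $\xi$. However, one ingredient you invoke is false, and as written it is a step that would fail. You assert, as something ``one must check'', that $\xi \mapsto R_{\xi^{-1}}$ depends smoothly on $\xi$, calling this ``the standard but slightly delicate fact that inversion is smooth on the diffeomorphism group modelled on $W^{2l-1,\infty}$''. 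There is no such fact: on diffeomorphism groups of finite regularity, inversion and composition are continuous but \emph{not} $C^1$. Indeed the formal derivative of $\xi \mapsto \xi^{-1}$ is $w \mapsto -(\xi^{-1})'\cdot(w\circ\xi^{-1})$, which loses one derivative because $(\xi^{-1})'$ only lies in $W^{2l-2,\infty}(\T)$; and the composition operators $R_\xi$, $R_{\xi^{-1}}$ are not even norm-continuous in $\xi$ as elements of $\mathcal{L}(W^{j,\infty}(\T))$. This failure is precisely why the Lagrangian approach of Ebin--Marsden and of \cite{suisses} must treat the conjugated operator $R_\xi P R_{\xi^{-1}}$ as a whole: the conjugate can be smooth in $\xi$ even though neither factor is. Fortunately your proof does not actually need the false claim --- after your change of variables $y=\xi^{-1}(x')$ the inverse diffeomorphism disappears from the formula --- so this ``ingredient'' should simply be deleted, not verified.

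A second, more minor point: your displayed closed form drops the Jacobian that your own first paragraph correctly identifies. Since $(-i\partial_x-\xi_0)_\xi = -\frac{i}{\xi'}\partial_x - \xi_0$, solving $(-i\partial_x-\xi_0)_\xi f = h$ amounts to $f' - i\xi_0\xi' f = i\xi' h$, whence
\begin{equation*}
f(x) = e^{i\xi_0\xi(x)}\Big( C_\xi[h] + i\int_0^x h(x')\,\xi'(x')\,e^{-i\xi_0\xi(x')}\,dx'\Big),
\end{equation*}
with $C_\xi[h]$ determined by periodicity and carrying the nonvanishing denominator $1-e^{i\xi_0}$, exactly as you say; the formula without $\xi'(x')$ solves $(-i\partial_x-\xi_0)_\xi f = h/\xi'$ instead. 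With the Jacobian restored, I would also replace your ``joint $C^1$'' step, which (being a statement for each fixed $h$) does not by itself yield differentiability in the operator norm uniformly over the unit ball of $h$, by the structural argument your decomposition already suggests: $T_\xi$ is a composition of the multiplication operators with symbols $e^{i\xi_0\xi}$ and $\xi' e^{-i\xi_0\xi}$, the fixed primitive operator, and the rank-one functional $C_\xi$; the symbol maps $\xi\mapsto e^{i\xi_0\xi}$ and $\xi\mapsto \xi' e^{-i\xi_0\xi}$ are smooth with values in $W^{2l-1,\infty}(\T)$ and $W^{2l-2,\infty}(\T)$ by the Banach-algebra property, and $g\mapsto M_g$ is linear and bounded into the operator space, so $\xi\mapsto T_\xi$ is $C^1$ (indeed $C^\infty$) in $\mathcal{L}(W^{j-1,\infty}(\T),W^{j,\infty}(\T))$. (This requires $j\leq 2l-1$ so that multiplication by $\xi' e^{-i\xi_0\xi}$ preserves $W^{j-1,\infty}(\T)$; the same implicit restriction is present in the paper's statement, and is harmless since the proposition is only applied in that range.)
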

This can be proved in the same way as \cite[Proposition 3]{suisses}.
\begin{Proposition}
The map $h$ is $C^1$ from $ \mathcal{D}_l \times W^{2l-1,\infty} ( \T)$ to $L^\infty  ( \T)$.
\end{Proposition}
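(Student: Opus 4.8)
The plan is to reduce the statement to the smoothness of a finite number of elementary ``building blocks'' obtained by conjugating $\partial_x$. First I would make the conjugation explicit. Since $R_\xi$ is multiplicative, $R_\xi(fg)=(R_\xi f)(R_\xi g)$, conjugation distributes over the products appearing in \eqref{CHkF}, so that
\[ h = (\mathcal{F})_\xi[v] = \sum_{0\leq m_1+m_2\leq 2l-1}\mathfrak{c}_{m_1,m_2}\, \big((\partial_x^{m_1})_\xi v\big)\big((\partial_x^{m_2})_\xi v\big), \]
where $(\partial_x^{m})_\xi := R_\xi\,\partial_x^{m}\,R_{\xi^{-1}}$. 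A direct computation using $(\xi^{-1})'\circ\xi = 1/\xi'$ (with $\xi' := \partial_x\xi$) gives $(\partial_x)_\xi = \tfrac{1}{\xi'}\partial_x$, hence $(\partial_x^{m})_\xi = \big(\tfrac{1}{\xi'}\partial_x\big)^{m}$. Working in a chart $\xi = \Id + f$, $f\in W^{2l-1,\infty}(\T)$, in a neighbourhood of $\Id$, the factor $\xi' = 1 + f'$ is bounded below by a positive constant, so all these operators are well defined.

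The core step is then to show that each building block $\Phi_m\colon (\xi,v)\mapsto \big(\tfrac{1}{\xi'}\partial_x\big)^{m} v$ is smooth (in particular $C^1$) from the chart of $\mathcal{D}_l$ times $W^{2l-1,\infty}(\T)$ into $W^{2l-1-m,\infty}(\T)$, for $0\leq m\leq 2l-1$. I would build this by iteration. Each $W^{k,\infty}(\T)$ is a Banach algebra, and the pointwise reciprocal $g\mapsto 1/g$ is real-analytic on the open set of functions bounded away from zero (Neumann series in the algebra); since $\xi\mapsto\xi' = 1+f'$ is affine continuous into $W^{2l-2,\infty}(\T)$, the map $\xi\mapsto 1/\xi'\in W^{2l-2,\infty}(\T)$ is smooth. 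Consequently the elementary map $(\xi,w)\mapsto \tfrac{1}{\xi'}\partial_x w$ is smooth from the chart times $W^{j,\infty}(\T)$ into $W^{j-1,\infty}(\T)$ for each $1\leq j\leq 2l-1$, being the composition of the bounded linear $\partial_x$, the smooth $\xi\mapsto 1/\xi'$, and the bounded bilinear multiplication of $W^{j-1,\infty}(\T)$ (here one uses the embedding $W^{2l-2,\infty}(\T)\hookrightarrow W^{j-1,\infty}(\T)$). Composing these $m$ elementary maps, with $v\in W^{2l-1,\infty}(\T)$ losing one derivative at each step, yields the smoothness of $\Phi_m$ into $W^{2l-1-m,\infty}(\T)$ by induction on $m$.

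Finally I would assemble $h$. By the identity above, $h$ is a finite sum of products $\Phi_{m_1}(\xi,v)\,\Phi_{m_2}(\xi,v)$ with $\Phi_{m_i}$ smooth into $W^{2l-1-m_i,\infty}(\T)\hookrightarrow L^\infty(\T)$; since multiplication $L^\infty(\T)\times L^\infty(\T)\to L^\infty(\T)$ is bounded bilinear, each product is smooth into $L^\infty(\T)$, and therefore so is $h$. In particular $h$ is $C^1$, as claimed. The only genuine obstacle is the derivative bookkeeping: one must check that the reciprocal $1/\xi'$, which belongs only to $W^{2l-2,\infty}(\T)$, is regular enough at every one of the up-to-$(2l-1)$ applications of the operator, so that all intermediate multiplications stay within a Banach-algebra class. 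This is exactly what forces the target of the top-order terms (those with $m_i = 2l-1$) down to $L^\infty(\T)$, and it is the reason why $L^\infty(\T)$, rather than a more regular space, is the correct codomain in the statement.
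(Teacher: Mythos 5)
Your proof is correct, and it shares its foundation with the paper's (deliberately brief) sketch: both rest on the conjugation formula $(\partial_x)_\xi = \frac{1}{\xi'}\,\partial_x$ and on the fact that $\mathcal{F}$ is a differential polynomial of order at most $2l-1$. The genuine difference is in how differentiability with respect to $\xi$ is established. The paper differentiates the conjugated operator by hand, via the second formula $D_w(\partial_x)_\xi = -\frac{w'}{(\xi')^2}\,\partial_x$, and concludes by the classical criterion that G\^ateaux differentiability plus continuity of the differential yields Fr\'echet $C^1$. You never compute a derivative at all: after distributing $R_\xi$ over the products in $\mathcal{F}$, you reduce to the blocks $\bigl(\frac{1}{\xi'}\partial_x\bigr)^m$ and get their regularity from structural facts --- $W^{k,\infty}(\T)$ is a Banach algebra, inversion is analytic on its invertible elements (Neumann series), $\partial_x$ and multiplication are bounded (bi)linear --- assembled by the chain rule, with careful accounting of the loss of one derivative per application (the embedding $W^{2l-2,\infty}(\T)\hookrightarrow W^{j-1,\infty}(\T)$ for $j\leq 2l-1$). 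Your route is longer but buys more: it gives $C^\infty$ (indeed analytic) dependence rather than just the $C^1$ claimed, and it makes transparent why $L^\infty(\T)$ is the correct target for the top-order terms; the paper's route is shorter and produces the explicit formula for the differential, which is what is actually fed into the de~Lellis--Kappeler--Topalov $C^1$-ODE framework. Finally, your restriction to a neighbourhood of $\Id$ is harmless: $C^1$ is a local property, every point of the charts of $\mathcal{D}_l$ has $\operatorname{essinf}\xi'>0$ locally uniformly, and this matches the appendix's reduction to a neighbourhood of $(\Id,0)$.
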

\begin{proof}[Sketch of proof]
The proposition follows easily from the formula:
\begin{equation*}
( \partial_x)_\xi = \frac{1}{\xi'(x)} \partial_x , \quad D_w ( \partial_x)_\xi  = - \frac{w'}{(\xi'(x))^2} \partial_x ,
\end{equation*}
and from the fact that  $ \mathcal{F}_l [u]$ is a differential polynomial in $u$ of order $2l-1$. With these formulas, one deduces that $h(\xi,v)$ is G\^ateaux-differentiable, and that the differential is continuous, so that $h$ is indeed of class $C^{1}$.
\end{proof}
Once the problem is solved in Lagrangian variables one use the following lemma to deduce  the existence part of Theorem \ref{start1k} by considering $u(t,x) := v(t, \xi(t,\cdot)^{-1} (x))$.
\begin{Lemma}
If $ \xi$ is in $ \mathcal{D}_l$ then  $ \xi$ is a homeomorphism and  $ \xi^{-1}$ is in $ \mathcal{D}_l$.
Moreover the following map 
\begin{equation*}
W^{2l-1,\infty} ( \T) \times  \mathcal{D}_l \rightarrow H^{2l-1}(\T) , \ \ \ (u, \xi ) \mapsto u \circ \xi ,
\end{equation*}
is continuous.
\end{Lemma}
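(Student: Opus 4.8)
The plan is to establish the two assertions in turn, the continuity of the composition map being the delicate one. Throughout write $\xi=\Id+f$ with $f\in W^{2l-1,\infty}(\T)$ and set $\delta:=1+\text{essinf } f'>0$, so that $\xi'=1+f'\geq\delta$ a.e. Since $2l-1\geq1$, $\xi$ is Lipschitz; being absolutely continuous with $\xi'\geq\delta>0$ it is strictly increasing, and $\xi(x+1)=\xi(x)+1$, so it descends to an increasing continuous bijection of $\T$ with continuous inverse, that is, a homeomorphism. Writing $\eta:=\xi^{-1}$ and differentiating $\xi\circ\eta=\Id$ gives $\eta'=1/(\xi'\circ\eta)$, whence $0<1/\|\xi'\|_{L^\infty}\leq\eta'\leq1/\delta$ and $\eta\in W^{1,\infty}(\T)$. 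I would then bootstrap the regularity: as $\xi'\geq\delta>0$ and $\xi'\in W^{2l-2,\infty}(\T)$ one has $1/\xi'\in W^{2l-2,\infty}(\T)$, and assuming $\eta\in W^{j,\infty}(\T)$ for some $1\leq j\leq2l-2$, the Fa\`a di Bruno formula expresses $\partial_x^{\,j}\big((1/\xi')\circ\eta\big)$ as a finite sum of products of factors $(1/\xi')^{(i)}\circ\eta$ with $i\leq j\leq2l-2$ and derivatives $\eta^{(a)}$ with $a\leq j$, all lying in $L^\infty(\T)$; hence $\eta'\in W^{j,\infty}(\T)$, i.e. $\eta\in W^{j+1,\infty}(\T)$, and iterating from $j=1$ to $j=2l-2$ yields $\eta\in W^{2l-1,\infty}(\T)$. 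Finally $\eta-\Id$ is periodic with $(\eta-\Id)'=\eta'-1\geq1/\|\xi'\|_{L^\infty}-1>-1$, so $\eta\in\mathcal{D}_l$.

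For the continuity statement, the reason the target is the $L^2$-based space $H^{2l-1}(\T)$ rather than $W^{2l-1,\infty}(\T)$ is the heart of the matter, and the proof rests on the $L^2$-continuity of the right-translation operator $R_\zeta w:=w\circ\zeta$. I would first record the change of variables bound
\begin{equation*}
\|w\circ\zeta\|_{L^2(\T)}^2=\int_\T|w(y)|^2\,(\zeta^{-1})'(y)\,dy\leq\|(\zeta^{-1})'\|_{L^\infty}\,\|w\|_{L^2(\T)}^2,
\end{equation*}
valid for every $\zeta\in\mathcal{D}_l$ and uniform for $\zeta$ ranging in a neighbourhood, the inverse Jacobians being uniformly bounded there by the first part. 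I would then prove that for a \emph{fixed} $w\in L^2(\T)$ one has $R_{\zeta_n}w\to R_\zeta w$ in $L^2(\T)$ as soon as $\zeta_n\to\zeta$ uniformly with uniformly bounded inverse Jacobians: approximate $w$ in $L^2(\T)$ by a continuous function $\varphi$, use that $\varphi\circ\zeta_n\to\varphi\circ\zeta$ uniformly, and control the two remainders by the uniform operator bound above, so that an $\varepsilon/3$ argument closes the estimate.

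With these tools, given $u_n\to u$ in $W^{2l-1,\infty}(\T)$ and $\xi_n\to\xi$ in $\mathcal{D}_l$ (both strong), I would show $\partial_x^{\,k}(u_n\circ\xi_n)\to\partial_x^{\,k}(u\circ\xi)$ in $L^2(\T)$ for every $0\leq k\leq2l-1$, which is convergence in $H^{2l-1}(\T)$. The Fa\`a di Bruno formula writes $\partial_x^{\,k}(u_n\circ\xi_n)$ as a finite sum of terms $(u_n^{(r)}\circ\xi_n)\,Q(\xi_n',\dots,\xi_n^{(k)})$. For $k\leq2l-2$ every factor is continuous and converges uniformly, since $u_n^{(r)}\to u^{(r)}$ in $W^{1,\infty}(\T)\subset C(\T)$ for $r\leq2l-2$ and $\xi_n^{(j)}\to\xi^{(j)}$ uniformly for $j\leq2l-2$, so each term converges in $L^\infty(\T)\subset L^2(\T)$. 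For $k=2l-1$ every term except $(u_n^{(2l-1)}\circ\xi_n)(\xi_n')^{2l-1}$ again has all its factors converging in $L^\infty(\T)$: those with $2\leq r\leq2l-2$ involve $u_n^{(r)}\circ\xi_n$ and only derivatives $\xi_n^{(j)}$ with $j\leq2l-2$, while the $r=1$ term carries the top derivative $\xi_n^{(2l-1)}\to\xi^{(2l-1)}$, which converges in $L^\infty(\T)$ under the strong topology. The delicate term is $(u_n^{(2l-1)}\circ\xi_n)(\xi_n')^{2l-1}$, where $u_n^{(2l-1)}$ is merely $L^\infty$; here I would split
\begin{equation*}
u_n^{(2l-1)}\circ\xi_n-u^{(2l-1)}\circ\xi=\big(u_n^{(2l-1)}-u^{(2l-1)}\big)\circ\xi_n+\big(u^{(2l-1)}\circ\xi_n-u^{(2l-1)}\circ\xi\big),
\end{equation*}
bounding the first summand in $L^2(\T)$ by the uniform operator bound times $\|u_n^{(2l-1)}-u^{(2l-1)}\|_{L^2(\T)}\to0$ and the second by the fixed-argument continuity of $R_\zeta$ just established, while $(\xi_n')^{2l-1}\to(\xi')^{2l-1}$ uniformly. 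The main obstacle is precisely this term $u_n^{(2l-1)}\circ\xi_n$: right-composition of an $L^\infty$ function is \emph{not} continuous in the $L^\infty$ norm under $L^\infty$-perturbations of the diffeomorphism, which is why continuity into the strong space $W^{2l-1,\infty}(\T)$ fails, whereas it does hold in $L^2(\T)$ and thus forces the $H^{2l-1}(\T)$ target.
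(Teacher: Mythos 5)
Your proposal is correct and takes essentially the same route as the paper, whose entire proof is the one-line remark that the lemma ``simply follows from Fa\`a di Bruno's formula'': your bootstrap for $\xi^{-1}$ and your expansion of $\partial_x^{k}(u\circ\xi)$ are precisely that formula at work. The extra ingredients you supply --- the uniform bound on inverse Jacobians and the $\varepsilon/3$ density argument giving $L^2$-continuity of right translation, which pinpoints why the target must be $H^{2l-1}(\T)$ rather than $W^{2l-1,\infty}(\T)$ --- are exactly the details the paper leaves implicit.
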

The previous lemma simply follows from Fa\`a di Bruno's formula. 
Note that the uniqueness in Eulerian variables  is inferred from the one in  Lagrangian variables, cf. \cite[Proposition 5]{suisses}. \par
\ \par
\noindent
{\bf Acknowledgements.} The  authors are partially supported by the Agence Nationale de la Recherche, Project CISIFS, grant ANR-09-BLAN-0213-02. They wish to thank Rapha\"el Danchin for useful discussions.
\end{document}